\newtheorem{lemma}{Lemma}
\newtheorem{proposition}{Proposition}
\newtheorem{thm}{Theorem}
\newtheorem{corollary}{Corollary}
\newcommand{\neutralize}[1]{\expandafter\let\csname c@#1\endcsname\count@}
\def\P{{\mathbb P}}
\def\text#1{\mbox{\rm #1}}
\newcommand{\RR}{\mathbb{R}}
\newcommand{\supp}{{\rm supp}}
\newcommand{\Z}{\mathbb{Z}}
\newcommand{\E}{\mathbb{E}}
\renewcommand{\P}{\mathbb{P}}
\newcommand{\cov}{\text{Cov}}
\newcommand{\var}{\text{Var}}
\newcommand{\fm}{f_{j_{1:m}}}
\newcommand{\diam}{\text{diam}}
\renewcommand{\exp}{\textup{exp}}
\renewcommand{\cal}{\mathcal}
\newcommand{\define}{:=}
\newcommand{\ms}{\quad~}
\newcommand{\td}{\widetilde}
\DeclarePairedDelimiter{\abs}{\lvert}{\rvert}
\DeclarePairedDelimiter{\bbrace}{\lbrace}{\rbrace}
\DeclarePairedDelimiter{\parr}{(}{)}
\DeclarePairedDelimiter{\fence}{[}{]}
\DeclarePairedDelimiter{\nm}{\|}{\|}
\let\widehat\widehat
\begin{document}
\selectlanguage{english}

\setlength{\abovedisplayskip}{5pt}
\setlength{\belowdisplayskip}{5pt}
\setlength{\abovedisplayshortskip}{5pt}
\setlength{\belowdisplayshortskip}{5pt}

\title{Exponential inequalities for dependent V-statistics via random Fourier features}

\author[1]{Yandi Shen}
\author[2]{Fang Han}
\author[3]{Daniela Witten}
\affil[1]{
University of Washington
 \\
ydshen@uw.edu
}
\affil[2]{
University of Washington
 \\
fanghan@uw.edu
}
\affil[3]{
University of Washington
 \\
dwitten@uw.edu
}

\date{}

\maketitle

\begin{abstract}
We establish exponential inequalities for a class of V-statistics under strong mixing conditions. Our theory is developed via a novel kernel expansion based on random Fourier features and the use of a probabilistic method. This type of expansion is new and useful for handling many notorious classes of kernels. 
\end{abstract}

{\bf Keywords:}   dependent V-statistics, strong mixing condition, kernel expansion, random Fourier features

\section{Introduction}
\label{sec:intro}

Consider the following V-statistic of order $m$ generated by the symmetric kernel $f$,
\begin{align}
\label{eq:V}
V_n(f) \define \sum_{i_1, \ldots, i_m =1}^n f\parr*{X_{i_1},\ldots, X_{i_m}},
\end{align}
where $\{X_i\}_{i=1}^n$ is a stationary sequence with marginal measure $P$ on the $d$-dimensional real space $\RR^d$. The purpose of this paper is to establish exponential-type tail bounds for \eqref{eq:V} when $\{X_i\}_{i=1}^n$ are weakly dependent. 

In \eqref{eq:V}, if the summation is taken over $m$-tuples $(i_1,\ldots, i_m)$ of distinct indices, 
the resulting is a U-statistic. In many applications, the techniques of analyzing U- and V-statistics are the same. Non-asymptotic tail bounds and limiting theorems of V- and U-statistics in the i.i.d. case have also been extensively studied \citep{hoeffding1963probability,arcones1993limit,gine2000exponential,adamczak2006moment}. 

The analysis of V- and U-statistics when the observed data are no longer independent has attracted increasing attention in statistics and probability, with most of the efforts put on deriving limit theorems and bootstrap consistency. See, for instance, \cite{yoshihara1976limiting}, 
\cite{denker1982statistical}, \cite{denker1983u}, 
\cite{dehling1989empirical},  
\cite{dewan2001asymptotic},  \cite{hsing2004weighted},  \cite{dehling2006limit}, \cite{dehling2010central}, \cite{beutner2012deriving}, \cite{leucht2012degenerate}, \cite{leucht2013degenerate},  \cite{zhou2014inference}, \cite{atchade2014martingale}, among many others. 
However, there are few results on non-asymptotic concentration bounds for V- and U-statistics. 
Exceptions include \cite{borisov2015note} and \cite{han2018exponential}, who proved Hoeffding-type inequalities for U- and V-statistics under $\phi$-mixing conditions. There, the results either rely on assumptions difficult to verify, or are limited to nondegenerate ones. 

In this paper, we show that for a strongly mixing stationary sequence, exponential inequalities 
hold for a large class of V- and U-statistics. The main theorem is presented in Section \ref{sec:main}. We then illustrate the usefulness of our theory with examples and some further extensions in Section \ref{sec:example}. Detailed proof of the main theorem is given in Section \ref{sec:proof1}, with the rest of proofs given in Section \ref{sec:proof2}.

Notation used in the rest of the paper is as follows. $L_1(\RR^d)$ denotes the class of integrable functions in $\RR^d$, and for each $p\geq 1$, $\nm*{f}_{L_p} \define \Big\{\int_{\RR^d} \abs*{f(x)}^pdx\Big\}^{1/p}$. For a real vector $u\in\RR^d$, $\|u\|$ denotes its Euclidean norm. For two real numbers $a,b$, $a\vee b\define \max\{a,b\}$. 

\section{Main results}
\label{sec:main}

For two $\sigma$-algebras $\cal{A}$ and $\cal{B}$, the strong mixing coefficient is defined as
\begin{align*}
\alpha(\cal{A},\cal{B}) \define \sup_{A\in\cal{A},B\in\cal{B}}\abs*{\P(A\cap B) - \P(A)\P(B)}.
\end{align*}
A stationary sequence $\{X_i\}_{i\in\Z}$ 
is called \emph{strong mixing} (hereafter also called \emph{$\alpha$-mixing}) if 
\begin{align*}
\alpha(i) \define \alpha(\cal{M}_0,\cal{G}_i) \rightarrow 0 \text{ as } i\rightarrow\infty,
\end{align*}
where $\cal{M}_0 \define \sigma(X_j,j\leq 0)$ and $\cal{G}_i \define \sigma(X_j,j\geq i)$ for $i\geq 1$ are the $\sigma$-algebras generated by $\{X_j,j\leq 0\}$ and $\{X_j,j\geq i\}$ respectively.

We now introduce concepts in V-statistics. Let $\bbrace{\td{X}_i}_{i=1}^n$ be an i.i.d. sequence with $\td{X}_1$ identically distributed as $X_1$. The mean value of a symmetric kernel $f$ is defined as
\begin{align*}
\theta \define \theta(f) \define \E f\parr*{\td{X}_1,\ldots,\td{X}_m}.
\end{align*}
The kernel $f$ is called \emph{centered} if $\theta(f) = 0$, and \emph{degenerate of level $r-1$} ($2 \leq r \leq m$) if 
\begin{align*}
\E f\parr*{x_1,\ldots,x_{r-1},\widetilde{X}_r,\ldots, \widetilde{X}_m} = \theta
\end{align*}
for any $(x_1^\top,\ldots, x_{r-1}^\top)^\top\in\supp(P^{r-1})$, the support of the product measure $P^{r-1}$. 
The kernel $f$ is called \emph{fully degenerate} if it is degenerate of level $m-1$. 

When $f$ is degenerate of level $r - 1$, its Hoeffding decomposition takes the form
\begin{align*}
f(x_1,\ldots,x_m)  - \theta = \sum_{1\leq i_1<\ldots <i_r\leq m}f_r\parr*{x_{i_1},\ldots,x_{i_r}} + \ldots + f_m(x_1,\ldots,x_m),
\end{align*}
where $\{f_p\}_{p=r}^m$ are recursively defined as
\begin{equation}
\begin{aligned}
\label{eq:HD}
&f_1(x) \define g_1(x),\\
&f_p\parr*{x_1,\ldots,x_p} \define g_p(x_1,\ldots,x_p) -\sum_{k=1}^p f_1(x_k)- \ldots - \sum_{1\leq k_1<\ldots <k_{p-1} \leq p}f_{p-1}\parr*{x_{k_1},\ldots,x_{k_{p-1}}},
\end{aligned}
\end{equation}
for $p=2,\cdots,m$, with $\{g_p\}_{p=1}^m$ defined as $g_m \define f - \theta$, and 
\begin{align*}
g_p(x_1,\ldots,x_p) \define \E f\parr*{x_1,\ldots,x_p,\widetilde{X}_{p+1},\ldots,\widetilde{X}_m} - \theta
\end{align*}
for $1\leq p\leq m-1$. For each $1\leq p\leq m$ and $1\leq k\leq n$, we denote the V-statistic generated by $f_p$ and data $\{X_i\}_{i=1}^k$ by
\begin{align*}
V_k(f_p) \define \sum_{i_1,\ldots,i_p = 1}^k f_p(X_{i_1},\ldots,X_{i_p}).
\end{align*}

For a real function $g\in L_1(\RR^d)$, its Fourier transform is defined as
\begin{align*}
\widehat{g}(u) \define \int_{\RR^d} g(x)e^{-2\pi iu^\top x}dx,
\end{align*}
where $dx \define dx_1\ldots dx_d$. 


\begin{thm}
\label{thm:main}
Suppose $\{X_i\}_{i=1}^n$ in \eqref{eq:V} is part of a stationary sequence $\{X_i\}_{i\in\Z}$ that is geometrically $\alpha$-mixing with coefficient
\begin{align}
\label{eq:mixing_coef}
\alpha(i) \leq \gamma_1\exp(-\gamma_2 i)~~~\text{for all }i\geq 1,
\end{align}
where $\gamma_1,\gamma_2$ are two positive absolute constants.  Suppose $f\in L_1(\RR^{md})$ is continuous, and its Fourier transform $\widehat{f}$ satisfies
\begin{align}
\label{eq:fourier_moment}
\int_{\RR^{md}}\abs*{\widehat{f}(u)}\|u\|^qdu < \infty
\end{align}
for some $q \geq 1$. Then, there exists a positive constant $C=C(m,\gamma_1,\gamma_2)$ such that for each $1\leq p \leq m$, and any $x > 0$, 
\begin{align}
\label{eq:tail}
\P\bbrace*{n^{-p}\max_{1\leq k\leq n}\abs*{V_k(f_p)}\geq x} \leq 6\exp\parr*{-\frac{Cnx^{2/p}}{A_{p,n}^{1/p} + x^{1/p}M_{p,n}^{1/p}}}
\end{align}
with 
\begin{align}
\label{eq:constant}
A_{p,n} = 2^{2m}\nm*{\widehat{f}}_{L_1}^2\bbrace*{\frac{64\gamma_1^{1/3}}{1-\exp(-\gamma_2/3)}+\frac{(\log n)^4}{n}}^p~~{\rm and}~~M_{p,n} = 2^m\nm*{\widehat{f}}_{L_1}(\log n)^{2p}.
\end{align}
\end{thm}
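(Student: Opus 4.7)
The plan is to lift the V-statistic into the Fourier domain, where the complicated symmetric kernel $f$ is replaced by a superposition of the separable exponential kernels $e^{2\pi i u^\top x}$, and then reduce the resulting multi-parameter integral to a single centered mixing sum via a random Fourier feature (``probabilistic method'') argument. Starting from the Fourier inversion $f(x)=\int_{\RR^{md}}\widehat f(u)\,e^{2\pi i u^\top x}\,du$, the separability $e^{2\pi i u^\top x}=\prod_{j=1}^m e^{2\pi i u_j^\top x_j}$ converts $V_k(e_u)$ into the product $\prod_{j=1}^m S_k(u_j)$ with $S_k(u):=\sum_{i=1}^k e^{2\pi i u^\top X_i}$. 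Writing $S_k(u_j)=k\varphi(u_j)+\td S_k(u_j)$, where $\varphi(u):=\E e^{2\pi i u^\top X_1}$ and $\td S_k(u):=\sum_{i=1}^k(e^{2\pi i u^\top X_i}-\varphi(u))$, and binomially expanding the product, I can match term by term against the Hoeffding expansion $V_k(f)-k^m\theta=\sum_{p=1}^m\binom{m}{p}k^{m-p}V_k(f_p)$ using the full symmetry of $\widehat f$ in its $m$ arguments. Together with a uniqueness argument (each candidate summand is a symmetric, fully degenerate $p$-th order V-statistic), this identifies the clean integral representation
\begin{align*}
V_k(f_p)=\int_{\RR^{md}}\widehat f(u)\prod_{j=1}^p\td S_k(u_j)\prod_{j=p+1}^m\varphi(u_j)\,du.
\end{align*}

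Next I reduce the product of $p$ mixing sums to a single one. Using $|\varphi|\le 1$, the inequality $\max_k\prod_j|a_k^{(j)}|\le \prod_j\max_k|a_k^{(j)}|$, and recognizing the Fourier integral as an expectation under a random frequency $U=(U_1,\ldots,U_m)$ drawn from the density $|\widehat f(u)|/\nm*{\widehat f}_{L_1}$, I obtain with $Z(u):=\max_{1\le k\le n}|\td S_k(u)|$,
\begin{align*}
\max_{1\le k\le n}|V_k(f_p)|\le \nm*{\widehat f}_{L_1}\cdot\E_U\prod_{j=1}^p Z(U_j).
\end{align*}
Raising this inequality to the $r$-th power, applying Jensen in $U$, then H\"older with exponent $p$ conditional on the data, and finally a second H\"older under the joint law of $U$ (whose one-dimensional marginals all agree by symmetry of $\widehat f$) collapse the $p$ factors into
\begin{align*}
\E\bigl[\max_{1\le k\le n}|V_k(f_p)|\bigr]^r\le \nm*{\widehat f}_{L_1}^r\sup_{u\in\RR^{md}}\E Z(u)^{pr}.
\end{align*}
All dependence on the random Fourier frequency has thereby been absorbed into a uniform moment bound on a single partial-sum maximum.

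For each fixed $u$, the real and imaginary parts of $\td S_k(u)$ are centered partial sums of a geometrically $\alpha$-mixing sequence whose summands are bounded by $2$. A maximal Bernstein inequality of Merlev\`ede--Peligrad--Rio type---proved via the standard exponential block decomposition fed by \eqref{eq:mixing_coef}---yields $\P(Z(u)\ge t)\le C\exp(-ct^2/(n\sigma_\star^2+t(\log n)^2))$ uniformly in $u$, where $\sigma_\star^2$ is the long-run variance constant of order $64\gamma_1^{1/3}/(1-\exp(-\gamma_2/3))$. Integrating the tail gives the two-regime moment bound $\E Z(u)^{pr}\lesssim (C_1 n\sigma_\star^2\cdot pr)^{pr/2}+(C_2(\log n)^2\cdot pr)^{pr}$. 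Plugging back into the moment inequality and applying Markov to $\{n^{-p}\max_k|V_k(f_p)|\ge x\}$ gives a bound of the form $[\nm*{\widehat f}_{L_1}\{(C_1\sigma_\star^2 pr/n)^{p/2}+(C_2 pr(\log n)^2/n)^p\}/x]^r$, and a Chernoff optimization in $r$ that balances the sub-Gaussian and sub-exponential regimes against the target level $x$ reproduces exactly the Bernstein denominator $A_{p,n}^{1/p}+x^{1/p}M_{p,n}^{1/p}$ in \eqref{eq:tail}. The prefactor $6$ absorbs factors of $2$ incurred in decomposing $\td S_k$ into real and imaginary parts and in the maximal-inequality constants.

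The main obstacle is the third step: producing a maximal Bernstein inequality for $Z(u)$ whose explicit constants, when fed through the Chernoff calculation, match \emph{exactly} the constants $A_{p,n}$ and $M_{p,n}$ claimed in \eqref{eq:constant}. The summands $e^{2\pi i u^\top X_i}-\varphi(u)$ are uniformly bounded by $2$, so the delicate task is controlling the long-run variance: a Davydov-type covariance bound summed geometrically against $\alpha(i)\le\gamma_1\exp(-\gamma_2 i)$ is what produces the constant $64\gamma_1^{1/3}/(1-\exp(-\gamma_2/3))$, while the $(\log n)^4/n$ slack in $A_{p,n}$ arises from the truncation error of the finite block argument. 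Once that inequality is in hand, the remaining steps are essentially mechanical Fourier-analytic and H\"older bookkeeping.
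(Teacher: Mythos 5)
Your proposal is correct in its essentials but takes a genuinely different route from the paper. You work with the \emph{exact} Fourier integral representation $V_k(f_p)=\int_{\RR^{md}}\widehat f(u)\prod_{j\le p}\td S_k(u_j)\prod_{j>p}\varphi(u_j)\,du$, which is indeed valid: it can be checked directly from the projection form $f_p(x_1,\ldots,x_p)=\int\prod_{j\le p}(\delta_{x_j}-P)(dy_j)\prod_{j>p}P(dy_j)\,f(y_1,\ldots,y_m)$ plus Fubini (no need for a separate uniqueness argument as you suggest). You then treat $|\widehat f|/\nm*{\widehat f}_{L_1}$ as a probability density over frequencies, collapse the $p$ mixing-sum factors via Jensen and H\"older into a single $\sup_u\E\,Z(u)^{pr}$, feed in the Merlev\`ede--Peligrad--Rio maximal Bernstein inequality, and close with a Chernoff--Markov optimization in the moment order $r$. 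The paper instead never uses the exact integral: it first establishes (Lemma~\ref{lemma:approximation}) a probabilistic existence result for a \emph{finite} random-feature dictionary $\{e_j\}$ giving a symmetric $\td f$ with a finite tensor expansion, shows the Hoeffding components track along (Lemma~\ref{lemma:hoeffding_approximation}), proves a tail bound for any kernel admitting a finite bounded tensor expansion (Lemma~\ref{lemma:tail}, via a Laplace-transform argument rather than raw moments), and then combines with a truncation to $[-M,M]^d$. Your continuous version buys simplicity: no truncation event, no two-parameter $(t,M)$ approximation, no existence-of-a-good-dictionary argument, and slightly sharper constants (you get $\nm*{\widehat f}_{L_1}^2$ rather than $2^{2m}\nm*{\widehat f}_{L_1}^2$ in $A_{p,n}$, though the gap is harmlessly absorbed into $C(m,\gamma_1,\gamma_2)$). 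The paper's discretized version buys modularity — Lemma~\ref{lemma:tail} applies to any kernel with a finite tensor structure, not just Fourier ones — and it is what literally realizes the title's ``random Fourier features via a probabilistic method.'' The one place you should be precise is the complex-to-real step: $Z(u)=\max_k|\td S_k(u)|$ is complex, so you need $Z(u)\le Z^{re}(u)+Z^{im}(u)$ before invoking the real-valued mixing maximal inequality, which introduces a $2^{pr}$ in the moment bound; this is the source of the factors you vaguely attribute to the prefactor~$6$ and it is also part of why your constants differ from the paper's by powers of~$2$.
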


We remark that a maximal-type tail estimate for $V_n(f)$ in \eqref{eq:V} can be obtained in a straightforward manner by assembling the tail estimate in \eqref{eq:tail} for each $r\leq p\leq m$, where $r$ is the degenerate level of $f$. Indeed, for any $1\leq k\leq n$ and $1\leq p \leq m$, we have by the symmetry of $f$
\begin{align*}
\sum_{1\leq i_1,\ldots,i_m\leq k}\sum_{1\leq j_1<\ldots<j_p\leq m}f_p(X_{i_{j_1}},\ldots,X_{i_{j_p}}) &= {m\choose p}\sum_{1\leq i_1,\ldots,i_m\leq k}f_p(X_{i_1},\ldots,X_{i_p})\\
& = {m\choose p}k^{m-p}\sum_{1\leq i_1,\ldots,i_p\leq k}f_p(X_{i_1},\ldots,X_{i_p}).
\end{align*}
This entails that
\begin{align*}
&\ms n^{-m}\max_{1\leq k\leq n}\abs*{\sum_{1\leq i_1,\ldots,i_m\leq k}\Big(f(X_{i_1},\ldots,X_{i_m}) - \theta\Big)}\\
& = n^{-m}\max_{1\leq k\leq n}\abs*{\sum_{1\leq i_1,\ldots,i_m\leq k}\sum_{p=r}^m\sum_{1\leq j_1<\ldots<j_p\leq m}f_p(X_{i_{j_1}},\ldots,X_{i_{j_p}})}\\
&\leq n^{-m}\max_{1\leq k\leq n}\sum_{r\leq p\leq m}{m\choose p}k^{m-p}\abs*{\sum_{1\leq i_1,\ldots,i_p\leq k}f_p(X_{i_1},\ldots,X_{i_p})}\\
&\leq \sum_{r\leq p\leq m} {m\choose p}n^{-p}\max_{1\leq k\leq n} \abs*{V_k(f_p)}.
\end{align*}
Therefore, by adjusting the constant $C$ in \eqref{eq:tail}, we obtain that
\begin{align*}
\P\parr*{n^{-m}\max_{1\leq k\leq n}\abs*{\sum_{1\leq i_1,\ldots,i_m\leq k}\Big(f(X_{i_1},\ldots,X_{i_m}) - \theta\Big)} \geq x}\leq 6\sum_{p=r}^m \exp\parr*{-\frac{Cnx^{2/p}}{A_{p,n}^{1/p} + x^{1/p}M_{p,n}^{1/p}}}.
\end{align*}

%

We now provide a proof sketch of Theorem \ref{thm:main} with a focus on the technical novelties. One key step in our proof is to find a uniform approximation $\td{f} = \td{f}(;t,M)$ of the original kernel $f$ under any prescribed accuracy $t$ such that (i) $\abs*{\td{f} - f}\leq t$ uniformly over a large enough compact set $[-M,M]^{md}$, and (ii) $\td{f}$ admits the following tensor expansion
\begin{align}
\label{eq:expansion}
\td{f}(x_1,\ldots,x_m) = \sum_{j_1,\ldots,j_m= 1}^K f_{j_1,\ldots,j_m}e_{j_1}(x_1)\ldots e_{j_m}(x_m). 
\end{align} 
Here, $K$ is a positive integer that depends on both the approximation error $t$ and the range of approximation $M$, $\{f_{j_1,\ldots,j_m}\}_{j_1,\ldots,j_m=1}^K$ is a real sequence, and $\{e_j(\cdot)\}_{j=1}^K$ is a set of uniformly bounded real bases. Once such an $\td{f}$ is found, a truncation argument will yield the proximity between $\{f_p\}_{p=1}^m$ and $\{\td{f}_p(;t,M)\}_{p=1}^m$, the latter being the degenerate components of $\td{f}(;t,M)$ in its Hoeffding decomposition. Then, using each $\td{f}_p(;t,M)$ as a proxy, standard moment estimates with the aid of exponential inequalities for partial sum processes (cf. Corollary 24 in \cite{merlevede2013rosenthal}) will render a tail bound for each $\max_{1\leq k\leq n}\abs*{V_k(f_p)}$. 

The problem then boils down to finding such an $\td{f}$ with the tensor structure \eqref{eq:expansion}. One main difficulty in this step 
is to construct expansion bases $\{e_j(\cdot)\}_{j=1}^K$ that are uniformly bounded. 
Many classical approaches in multivariate function approximation are unable to provide a satisfactory answer to this problem. For example, uniform polynomial approximation by the Stone-Weierstrass theorem will have very poor performance, since high orders of the polynomials lead to a  large upper bound of the bases. The use of Lipschitz-continuous scale and wavelet functions, as exploited in \cite{leucht2012degenerate}, is also inappropriate for the same reason.  

Our solution is based on a probabilistic method, and especially, by realizing that the tensor decomposition \eqref{eq:expansion} is intrinsically connected to the idea of randomized feature mapping \citep{rahimi2008random} in the kernel learning literature. More specifically, when $f\in L_1(\RR^{md})$ is continuous and $\widehat{f}\in L_1(\RR^{md})$, the Fourier inversion formula implies that 
\begin{align*}
f(x_1,\ldots,x_m) = \int_{\RR^{md}} \widehat{f}(u_1,\ldots,u_m)e^{2\pi i(u_1^\top x_1+\ldots+u_m^\top x_m)}du_1\ldots du_m,
\end{align*}
where the right-hand side can be seen as the expectation of a Fourier basis with random frequency, which follows the sign measure of $\widehat{f}$. Due to the boundedness of the Fourier bases, Hoeffding's inequality guarantees an exponentially fast rate for a sample mean statistic of Fourier bases 
\begin{align*}
s_K(x_1,\ldots,x_m) \define \frac{1}{K}\sum_{j=1}^K\exp\Big\{2\pi i(u_{j,1}^\top x_1+\ldots+u_{j,m}^\top x_m)\Big\}
\end{align*}
to approximate $f$ at each fixed point $x\in\RR^{md}$. The elements $\exp\{2\pi i(u_{j,1}^\top x_1+\ldots+u_{j,m}^\top x_m)\}$ in $s_K(x_1,\ldots,x_m)$ naturally decompose to bounded basis functions of inputs $x_j$.  An entropy-type argument is then used so that we could prove the existence of a satisfactory set of bases such that the approximation holds uniformly over any compact set $[-M,M]^{md}$. The detailed proof will be given in Section \ref{sec:proof1}.

\section{Examples and extensions}
\label{sec:example}

Motivated by their wide applications in statistics and machine learning, we will put special focus on shift-invariant symmetric kernels in the case $m=2$ with $f(x,y) = f_0(x-y)$ for some $f_0:\RR^d\rightarrow\RR$.
%
We start with a corollary of Theorem \ref{thm:main} for such kernels. 
\begin{corollary}
\label{cor:smooth_1}
Let $\{X_i\}_{i=1}^n$ be as in Theorem \ref{thm:main}. Let $m = 2$ and the kernel $f$ be shift-invariant with $f(x,y) = f_0(x-y)$ for some $f_0:\RR^d\rightarrow \RR$. Suppose that $f_0\in L_1(\RR^d)$ is continuous, and its Fourier transform $\widehat{f}_0$ satisfies 
\begin{align}
\label{eq:fourier}
\int_{\RR^{d}}\abs*{\widehat{f}_0(u)}\|u\|^qdu < \infty
\end{align}
for some $q \geq 1$. Then, for $p=1,2$, the same tail bound in \eqref{eq:tail} holds with 
\begin{align*}
A_{p,n} = 16\nm*{\widehat{f}_0}_{L_1}^2\bbrace*{\frac{64\gamma_1^{1/3}}{1-\exp(-\gamma_2/3)}+\frac{(\log n)^4}{n}}^p~~{\rm and}~~M_{p,n} = 4\nm*{\widehat{f}_0}_{L_1}(\log n)^{2p}.
\end{align*}
\end{corollary}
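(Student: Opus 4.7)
The main obstacle is that the kernel $f(x,y) = f_0(x-y)$ is typically not an element of $L_1(\RR^{2d})$ (it is constant along the diagonal direction $x = y + c$), so Theorem~\ref{thm:main} cannot be invoked as a black box. The plan is instead to revisit the proof of Theorem~\ref{thm:main} and replace the $2d$-dimensional random Fourier feature expansion of $f$ by a $d$-dimensional one for $f_0$, exploiting the shift-invariant structure.

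The key observation is that condition \eqref{eq:fourier} together with continuity of $f_0$ places $f_0,\widehat{f}_0 \in L_1(\RR^d)$, so Fourier inversion gives
\begin{align*}
f_0(z) = \int_{\RR^d} \widehat{f}_0(u)\, e^{2\pi i u^\top z}\,du = \nm*{\widehat{f}_0}_{L_1}\, \E_{U,\xi}\bigl[\xi\, e^{2\pi i U^\top z}\bigr],
\end{align*}
where $U$ has density $\abs*{\widehat{f}_0}/\nm*{\widehat{f}_0}_{L_1}$ and $\xi = \sgn(\widehat{f}_0(U))$. Substituting $z = x - y$ yields
\begin{align*}
f(x,y) = \nm*{\widehat{f}_0}_{L_1}\, \E_{U,\xi}\bigl[\xi\, e^{2\pi i U^\top x}\, e^{-2\pi i U^\top y}\bigr],
\end{align*}
which already displays the desired tensor structure in $(x,y)$ with both factors uniformly bounded by $1$. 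From here I would rerun, verbatim, the Hoeffding-plus-entropy argument of Theorem~\ref{thm:main}, now applied to $f_0$ on $[-2M,2M]^d$ (the range of $x-y$ as $x,y$ vary in $[-M,M]^d$): for any prescribed accuracy $t$ and radius $M$, the probabilistic method produces frequencies $u_1,\dots,u_K$ and signs $\xi_1,\dots,\xi_K$ such that
\begin{align*}
\td{f}(x,y;t,M) \define \frac{\nm*{\widehat{f}_0}_{L_1}}{K} \sum_{j=1}^K \xi_j\, e^{2\pi i u_j^\top x}\, e^{-2\pi i u_j^\top y}
\end{align*}
approximates $f(x,y)$ uniformly on $(x,y)\in[-M,M]^{2d}$ within error $t$. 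This is exactly the tensor expansion \eqref{eq:expansion} with $m=2$, uniformly bounded bases, and coefficients of magnitude $\nm*{\widehat{f}_0}_{L_1}/K$.

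The remainder of the proof follows the argument for Theorem~\ref{thm:main} with essentially no change: the truncation to $[-M,M]^d$ is handled by \eqref{eq:fourier} in the same way \eqref{eq:fourier_moment} handles the corresponding step for $f$; the Hoeffding decomposition \eqref{eq:HD} applied to $\td{f}$ yields approximants to $f_p$ for $p=1,2$; and the exponential inequality of \cite{merlevede2013rosenthal} is invoked on the resulting partial-sum processes. Throughout, every occurrence of $\nm*{\widehat{f}}_{L_1}$ in the proof of Theorem~\ref{thm:main} is replaced by $\nm*{\widehat{f}_0}_{L_1}$, producing the stated constants $A_{p,n} = 16\nm*{\widehat{f}_0}_{L_1}^2\{\ldots\}^p$ and $M_{p,n} = 4\nm*{\widehat{f}_0}_{L_1}(\log n)^{2p}$, which match \eqref{eq:constant} with $m=2$ under the substitution $\widehat{f}\mapsto\widehat{f}_0$. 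The main point to verify, and likely the only nontrivial one, is that the entropy-based construction of uniformly bounded bases in Theorem~\ref{thm:main} depends on the dimension of the random frequency only through logarithmic factors already absorbed into the universal constant $C$; consequently, reducing the frequency dimension from $2d$ to $d$ does not inflate the tail bound.
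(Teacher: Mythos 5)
Your proposal is essentially the paper's own argument: both reduce the problem to an application of Lemma~\ref{lemma:approximation} to the $d$-dimensional function $f_0$ on $[-2M,2M]^d$ (avoiding the non-integrability of $f$ on $\RR^{2d}$) and then exploit the factorization of the shift-invariant Fourier basis in $(x,y)$ to obtain the tensor expansion \eqref{eq:expansion} with $m=2$. One small point to tidy up: Lemma~\ref{lemma:tail} requires \emph{real} bases $\{e_j\}$, so the complex exponentials $e^{2\pi i u_j^\top x}e^{-2\pi i u_j^\top y}$ must be split into products of cosines and sines via $\cos\{2\pi u^\top(x-y)\} = \cos(2\pi u^\top x)\cos(2\pi u^\top y) + \sin(2\pi u^\top x)\sin(2\pi u^\top y)$ (which is precisely the paper's step and is what produces the stated factor $4\nm*{\widehat{f}_0}_{L_1}$); also note $\widehat f_0$ being real-valued, as required to define $\xi = \sgn(\widehat f_0(U))$, follows from the assumed symmetry of $f$, i.e.\ evenness of $f_0$.
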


In view of Bochner's theorem (cf. Section 1.4.3, \cite{rudin2017fourier}), Corollary \ref{cor:smooth_1} can be further simplified when the kernel is positive definite.
Recall that a real function $g_0:\RR^d\rightarrow\RR$ is said to be \emph{positive definite (PD)} if for any positive integer $n$ and real vectors $\{x_i\}_{i=1}^n\in\RR^d$, the matrix $A = (a_{i,j})_{i,j=1}^n$ with $a_{i,j} = g_0(x_i-x_j)$ is positive semi-definite (PSD). 

\begin{corollary}[]
\label{cor:smooth_2}
Let $\{X_i\}_{i=1}^n$ be as in Theorem \ref{thm:main}. Let $m = 2$ and the kernel $f$ be shift-invariant with $f(x,y) = f_0(x-y)$ for some $f_0:\RR^d\rightarrow \RR$. Suppose $f_0$ satisfies the conditions in Corollary \ref{cor:smooth_1} and is also PD. Then, for $p=1,2$, the same tail bound in \eqref{eq:tail} holds with
\begin{align*}
A_{p,n} = 4f_0(0)^2\bbrace*{\frac{64\gamma_1^{1/3}}{1-\exp(-\gamma_2/3)}+\frac{(\log n)^4}{n}}^p~~{\rm and}~~M_{p,n} = 2f_0(0)(\log n)^{2p}.
\end{align*}
Moreover, the same bound holds with the above $A_p$ and $M_p$ if $f_0$ only satisfies \eqref{eq:fourier} for some $0 < q < 1$, but is both PD and Lipschitz continuous. 
\end{corollary}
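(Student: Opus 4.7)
The plan is to re-run the proof of Corollary~\ref{cor:smooth_1} with the random Fourier feature expansion streamlined by Bochner's theorem, and for the relaxed Fourier-moment case to route the uniform-approximation step through the Lipschitz constant of $f_0$ rather than that of $\td{f}$.

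First I would invoke Bochner's theorem: since $f_0$ is continuous and PD with $\widehat{f}_0\in L_1(\RR^d)$, Fourier inversion combined with the uniqueness of Bochner's representation forces $\widehat{f}_0\geq 0$. Together with the fact that $f_0$ is real and even---the latter because the bivariate kernel $f(x,y)=f_0(x-y)$ is symmetric in its arguments---this makes $\mu\define\widehat{f}_0/f_0(0)$ a symmetric probability measure with total mass $f_0(0)=\|\widehat{f}_0\|_{L_1}$, and
\begin{align*}
f_0(x-y)=f_0(0)\,\E_{u\sim\mu}\bbrace*{\cos(2\pi u^\top x)\cos(2\pi u^\top y)+\sin(2\pi u^\top x)\sin(2\pi u^\top y)}.
\end{align*}
The random feature surrogate $\td{f}(x,y)=(f_0(0)/K)\sum_{j=1}^K[\cos(2\pi u_j^\top x)\cos(2\pi u_j^\top y)+\sin(2\pi u_j^\top x)\sin(2\pi u_j^\top y)]$ with $u_j\stackrel{\text{iid}}{\sim}\mu$ then carries no sign factor $\sgn(\widehat{f}_0(u_j))$, and the prefactor is $f_0(0)$ rather than $\|\widehat{f}_0\|_{L_1}$. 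Pushing this cleaner expansion through the remaining machinery of Theorem~\ref{thm:main}---pointwise Hoeffding concentration, an $\epsilon$-net argument over $[-M,M]^{2d}$ that controls $\td{f}$'s Lipschitz constant by $\int\|u\|\widehat{f}_0(u)du<\infty$ under \eqref{eq:fourier} with $q\geq 1$, truncation between the Hoeffding decompositions of $f$ and $\td{f}$, and the Merlev\`ede-type partial-sum exponential inequality---the removal of the sign factor and the reduction from $2^m=4$ complex bases to $2$ real bases shrinks the leading $2^{2m}$ in $A_{p,n}$ to $2^2=4$ and the $2^m$ in $M_{p,n}$ to $2$, giving precisely $4f_0(0)^2$ and $2f_0(0)$.

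For the claim under $0<q<1$ with Lipschitz $f_0$: the only place where $q\geq 1$ is used in the above is to control $\td{f}$'s Lipschitz constant in the $\epsilon$-net step. When $f_0$ itself is $L$-Lipschitz, I would bypass this via the triangle inequality: for net points $(x,y),(x',y')\in[-M,M]^{2d}$ at distance at most $\epsilon$,
\begin{align*}
|\td{f}(x,y)-\td{f}(x',y')|\leq|\td{f}(x,y)-f(x,y)|+2L\epsilon+|\td{f}(x',y')-f(x',y')|,
\end{align*}
and the outer terms are dominated by pointwise Hoeffding on the finite net. The condition $q>0$ then suffices only to guarantee $\widehat{f}_0\in L_1$ so that $\mu$ is a probability measure (the small-frequency part is automatic from $f_0\in L_1$, and the large-frequency part is controlled by $\int_{\|u\|\geq 1}|\widehat{f}_0(u)|\|u\|^q du<\infty$). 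The main obstacle is careful bookkeeping through the truncation and entropy arguments to confirm that the reduction from $4$ complex bases to $2$ real ones produces exactly the constants $2^2$ and $2$ claimed, and to ensure that the symmetry of $\mu$ propagates through the Hoeffding decomposition of $\td{f}$ without reintroducing imaginary cross-terms.
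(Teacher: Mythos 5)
Your treatment of the PD case with $q\geq 1$ is correct and follows the paper's route: Bochner's theorem and evenness of $f_0$ kill the $\widehat h$ and $\widehat g^-$ pieces, leaving a single nonnegative measure with total mass $f_0(0)=\nm*{\widehat f_0}_{L_1}$, so the weight constant $F$ for $\widetilde f_0$ drops from $2\nm*{\widehat f_0}_{L_1}$ to $f_0(0)$ and, after the trigonometric product-to-sum expansion of $\widetilde f_0(x-y)$, from $4\nm*{\widehat f_0}_{L_1}$ to $2f_0(0)$; plugging $F=2f_0(0)$, $B=1$ into Lemma~\ref{lemma:tail} gives exactly the stated $A_{p,n}$, $M_{p,n}$.

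The fractional-moment part, however, has a genuine gap. Your proposal is to bypass control of $\text{Lip}(\td f)$ by the triangle inequality through $f$. But trace the logic: to bound $|\td f(x,y)-f(x,y)|$ at an arbitrary $(x,y)\in\cal M$ you route through the nearest net point $(x',y')$, giving $|\td f(x,y)-f(x,y)|\le|\td f(x,y)-\td f(x',y')|+|\td f(x',y')-f(x',y')|+L\epsilon$, and your displayed inequality is then used to bound the first term as $|\td f(x,y)-\td f(x',y')|\le|\td f(x,y)-f(x,y)|+L\epsilon+|\td f(x',y')-f(x',y')|$. Substituting reproduces $|\td f(x,y)-f(x,y)|$ on both sides and cancels to a tautology. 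There is no way to avoid controlling the oscillation of the random function $s_D$ itself. What the paper actually does in the proof of Corollary~\ref{cor:smooth_2} is keep the gradient control on $s_D$, but trade the exchange-of-derivative-and-expectation identity (which fails without a first moment) for the direct subadditivity bound $\E\big(\sup_x\|\nabla_x s_D(x)\|^q\big)\le (2\pi)^q D^{1-q}\E\|u_1\|^q$, valid for $0<q\le 1$; the Lipschitz constant of $f_0$ then enters additively via $\|\nabla_x(s_D-\td f_0)\|^q\le\|\nabla_x s_D\|^q+L_{\td f_0}^q$. The resulting Markov bound grows like $D^{(1-q)md/(q+md)}$, polynomially in $D$, so it is still dominated by the $\exp(-cD)$ Hoeffding term. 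Your plan misses this mechanism: the Lipschitzness of $f_0$ is an ingredient, not a substitute, and the fractional moment of $\widehat f_0$ is still essential for bounding $\text{Lip}(s_D)$, not merely for ensuring $\widehat f_0\in L_1$.
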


We now list several commonly-used kernels covered by Theorem \ref{thm:main} and the previous two corollaries.  
\begin{enumerate}
\item The $d$-dimensional Gaussian kernel $f(x,y) = f_0(x-y) = \exp\parr*{-\|x-y\|^2/2}$ is shift-invariant with $f_0$ being both Schwartz and PD, and $f_0$ satisfies \eqref{eq:fourier} for arbitrary $q\geq 1$. Thus, $f$ satisfies the conditions of  Corollary \ref{cor:smooth_2}.
\item For the $d$-dimensional Cauchy kernel $f(x,y) = f_0(x-y)$ with $f_0(x) = \prod_{\ell=1}^d 2/\parr*{1+x_\ell^2}$, $f_0$ is PD and its Fourier transform $\widehat{f}_0(u) = \exp\parr*{-\|u\|_1}$ satisfies \eqref{eq:fourier} for arbitrary $q \geq 1$. Therefore, it satisfies the conditions of Corollary \ref{cor:smooth_2}. 
\item The $d$-dimensional Laplacian kernel $f(x,y) = f_0(x-y) = \exp(-\|x-y\|_1)$ is shift-invariant and PD. The Fourier transform of $f_0$ is $\widehat{f}_0(u) = \prod_{\ell=1}^d \bbrace*{2/(1+u_\ell^2)}$, which has fractional moments and thus satisfies \eqref{eq:fourier} for any $0<q<1$. Since $f_0$ is both PD and Lipschitz, it satisfies the conditions in Corollary \ref{cor:smooth_2}.
\item The 1-dimensional ``hat" kernel: $f(x,y) = f_0(x-y)$ with $f_0(x)$ equal to $x+1$ for $-1\leq x\leq 0$, $1-x$ for $0\leq x\leq 1$ and $0$ otherwise. $f_0$ is PD and $1$-Lipschitz. Its Fourier transform is $\widehat{f}_0(u) = \bbrace*{1-\cos\parr*{2\pi u}}/(2\pi^2u^2)$ and thus has fractional moment. Therefore, $f_0$ satisfies \eqref{eq:fourier} for any $0 < q < 1$, and hence is also covered by Corollary \ref{cor:smooth_2}. 
\end{enumerate}
 
We then discuss extensions to Theorem \ref{thm:main}. The smoothness assumption \eqref{eq:fourier_moment} in Theorem \ref{thm:main} could be further relaxed by employing the standard smoothing technique through mollifiers. More precisely, we resort to an intermediate kernel $f_h$ between $f$ and $\td{f}$. It is constructed by convolving $f$ with the Gaussian mollifier with scale parameter $h$. The parameter $h$ controls the trade-off between approximation error and smoothness: small $h$ leads to finer approximation of $f$ by $f_h$, but makes $f_h$ less smooth and thus renders a larger constant in the tail bound. Theorem \ref{thm:main} is then applied on this intermediate kernel $f_h$ to obtain the tail bound.

As a particular example, the following corollary deals with Lipschitz kernels considered in \cite{leucht2012degenerate}. 
Introduce the following constant from integration with polar coordinates (with convention $(-1)!! = 0!! = 1$):
\begin{align}
\label{eq:polar_constant}
\Gamma(n) &\define 
\begin{cases}
\parr*{(n-2)!!}^{-1}(2\pi)^{\frac{n}{2}} & n \text{ is even}\\
\parr*{(n-2)!!}^{-1}2(2\pi)^{\frac{n-1}{2}} & n \text{ is odd}
\end{cases}.
\end{align}

\begin{corollary}
\label{cor:unif_cont}
Let $\{X_i\}_{i=1}^n$ be as in Theorem \ref{thm:main}. Suppose the kernel $f \in L_1(\RR^{md})$ is bounded, uniformly continuous, and its Fourier transform satisfies 
\begin{align}
\label{eq:fourier_tail}
\abs*{\widehat{f}(u)} \leq \frac{L}{1+\|u\|^{md+\varepsilon}}
\end{align}
for some $\varepsilon > 0$ and positive constant $L$. Then, for $1\leq p\leq m$, the bound in \eqref{eq:tail} holds with 
\begin{align*}
A_{p,n} = (1+\varepsilon^{-1})^22^{2m}c_1^2L^2\bbrace*{\frac{64\gamma_1^{1/3}}{1-\exp(-\gamma_2/3)}+\frac{(\log n)^4}{n}}^p~{\rm and}~M_{p,n} = (1+\varepsilon^{-1})2^mc_1L(\log n)^{2p},
\end{align*}
where $c_1 = \Gamma(md)$.
\end{corollary}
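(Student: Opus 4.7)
The strategy is to apply Theorem~\ref{thm:main} not to $f$ directly (which may fail \eqref{eq:fourier_moment} when $\varepsilon < 1$) but to a Gaussian-mollified surrogate $f_h = f * \phi_h^{\otimes m}$, where $\phi_h(x) = (2\pi h^2)^{-d/2}\exp(-\|x\|^2/(2h^2))$ for $x\in\RR^d$. Since $\widehat{f_h}(u) = \widehat{f}(u)\exp(-2\pi^2 h^2\|u\|^2)$ has Gaussian tails, \eqref{eq:fourier_moment} holds for every $q\geq 1$, and crucially $\|\widehat{f_h}\|_{L_1}\leq\|\widehat{f}\|_{L_1}$ uniformly in $h$. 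Uniform continuity of $f$ will then let us shrink $h$ (at a rate allowed to depend on $x$ and $n$) to make the discrepancy between $f_p$ and $f_{h,p}$ negligible, while the tail-bound constants depend on $h$ only through $\|\widehat{f}\|_{L_1}$.

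The concrete steps are as follows. First, I would bound $\|\widehat{f}\|_{L_1}$ directly from \eqref{eq:fourier_tail} via polar coordinates: $\Gamma(md)$ appears as the surface area of the unit sphere in $\RR^{md}$, and splitting the radial integral $\int_0^\infty r^{md-1}/(1+r^{md+\varepsilon})\,dr$ at $r=1$ yields $\|\widehat{f}\|_{L_1}\leq (1+\varepsilon^{-1})c_1 L$. Next, I would verify that $f_h$ meets the hypotheses of Theorem~\ref{thm:main}: $C^\infty$ regularity, $L_1$ integrability, permutation symmetry (since $\phi_h$ factorizes over the $m$ coordinate blocks of $\RR^{md}$), and \eqref{eq:fourier_moment} for any $q\geq 1$. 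Third, I would control $\|f - f_h\|_\infty$ by splitting the mollification integral at radius $\delta = h\sqrt{c\log n}$: the inside is at most the modulus of continuity $\omega_f(\delta)$, while the tail contributes at most $2\|f\|_\infty$ times a Gaussian deviation bound, so uniform continuity of $f$ lets me force $\|f - f_h\|_\infty \leq \eta$ for any prescribed $\eta>0$ by choosing $h$ small. Fourth, linearity of the Hoeffding decomposition \eqref{eq:HD} in the kernel gives $(f - f_h)_p = f_p - f_{h,p}$, and a straightforward induction over \eqref{eq:HD} shows $\|(f - f_h)_p\|_\infty \leq C_p\eta$ for a combinatorial $C_p$; setting $\eta = x/(2 C_p)$ forces the deterministic error $n^{-p}|V_k((f - f_h)_p)|$ below $x/2$, so
\begin{align*}
\P\bbrace*{n^{-p}\max_{1\leq k\leq n}|V_k(f_p)|\geq x}\leq \P\bbrace*{n^{-p}\max_{1\leq k\leq n}|V_k(f_{h,p})|\geq x/2}.
\end{align*}
Finally, invoking Theorem~\ref{thm:main} for $f_h$ with the uniform bound $\|\widehat{f_h}\|_{L_1}\leq (1+\varepsilon^{-1})c_1 L$ produces precisely the claimed $A_{p,n}$ and $M_{p,n}$; the factor $2$ in $x/2$ is absorbed into the constant $C(m,\gamma_1,\gamma_2)$.

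The main obstacle will be Step~4: one has to chase through the Hoeffding recursion \eqref{eq:HD} to confirm that $\|(f - f_h)_p\|_\infty$ is bounded by an $n$-free combinatorial multiple of $\|f - f_h\|_\infty$, ensuring that making the sup-norm small drives every Hoeffding component uniformly small. A secondary point demanding care is that the choice $h = h(x,n)$ must not affect the output—this is automatic since $A_{p,n}$ and $M_{p,n}$ depend on $h$ only through $\|\widehat{f_h}\|_{L_1}$, which is itself $h$-free after the uniform upper bound—and the constant accounting in Step~1 must avoid losing the clean $(1+\varepsilon^{-1})$ scaling stated in the corollary.
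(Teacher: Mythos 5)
Your proposal is correct and rests on the same two pillars as the paper's proof: Gaussian mollification of $f$ to produce a surrogate $f_h$ whose Fourier transform has Gaussian decay, and a polar-coordinate integration to bound $\nm*{\widehat{f}_h}_{L_1}\leq (1+\varepsilon^{-1})\Gamma(md)L$. The organizational difference is that you invoke Theorem~\ref{thm:main} as a black box on $f_h$ and then explicitly chase the discrepancy $f-f_h$ through the Hoeffding recursion \eqref{eq:HD}, whereas the paper instead re-derives the conclusion of Lemma~\ref{lemma:approximation} for $f$ directly (by composing the mollification with the random-feature construction, yielding a $\widetilde{f}_h$ with $|f-\widetilde{f}_h|\le t$ on $[-M,M]^{md}$ and tensor coefficients bounded by $2^m\nm*{\widehat{f}_h}_{L_1}$), after which the remainder of Theorem~\ref{thm:main}'s proof through Lemma~\ref{lemma:hoeffding_approximation} carries the Hoeffding-level error implicitly. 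Your version is arguably more modular and avoids re-opening Lemma~\ref{lemma:approximation}; the paper's version avoids the explicit combinatorial constant $C_p$ and the extra step of absorbing the factor $2$ into $C(m,\gamma_1,\gamma_2)$. Both are sound.

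One minor technical quibble: your split radius $\delta=h\sqrt{c\log n}$ for the mollification error is unnecessarily entangled with $n$. With $c$ fixed, the Gaussian-tail term contributes on the order of $\nm*{f}_\infty n^{-c/2}$, which is \emph{not} driven small by shrinking $h$ and would only be controlled for $n$ large; this fights against your later need to choose $\eta=x/(2C_p)$ for an arbitrary prescribed $x$. The clean fix — which is what the paper does — is to split at a fixed radius $A=A(\nm*{f}_\infty,m,d,\eta)$ making the Gaussian tail $\le\eta/4$, and then invoke uniform continuity to choose $h$ so small that $\omega_f(Ah)\le\eta/4$, giving $\nm*{f-f_h}_\infty\le\eta$ uniformly for every $n$.
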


The tail condition in \eqref{eq:fourier_tail} is in general milder than \eqref{eq:fourier_moment} in Theorem \ref{thm:main}, and naturally arises in Fourier analysis (cf. Chapter 8.4 in \cite{folland2013real}). The following is the version of Corollary \ref{cor:unif_cont} for shift-invariant kernels.
\begin{corollary}
\label{cor:unif_cont_shift}
Suppose $\{X_i\}_{i=1}^n$ are as in Theorem \ref{thm:main}. Let $m=2$ and the kernel $f$ be shift-invariant with $f(x,y) = f_0(x-y)$ for some $f_0:\RR^d\rightarrow\RR$. Suppose that $f_0$ satisfies condition \eqref{eq:fourier_tail} (with $m = 1$) for some $\varepsilon > 0$ and positive constant $L$. Then, for $p=1,2$, the bound in \eqref{eq:tail} holds with 
\begin{align*}
A_{p,n} = 16(1+\varepsilon^{-1})^2c_1^2L^2\bbrace*{\frac{64\gamma_1^{1/3}}{1-\exp(-\gamma_2/3)}+\frac{(\log n)^4}{n}}^p~{\rm and}~M_{p,n} = 4(1+\varepsilon^{-1})c_1L(\log n)^{2p},
\end{align*}
where $c_1 = \Gamma(d)$.
\end{corollary}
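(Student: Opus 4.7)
The strategy mirrors that of Corollary~\ref{cor:unif_cont}, but performs the Gaussian mollification in $\RR^d$ rather than in $\RR^{md} = \RR^{2d}$, so as to exploit the shift-invariant structure exactly as Corollary~\ref{cor:smooth_1} exploits it relative to Theorem~\ref{thm:main}. The plan is to introduce the intermediate kernel $f_h(x,y) \define f_{0,h}(x-y)$, where $f_{0,h} \define f_0 \ast \phi_h$ and $\phi_h(x) = (2\pi h^2)^{-d/2}\exp(-\|x\|^2/(2h^2))$ is the $d$-dimensional Gaussian mollifier. Its Fourier transform $\widehat{f}_{0,h}(u) = \widehat{f}_0(u)\exp(-2\pi^2 h^2\|u\|^2)$ inherits the tail bound~\eqref{eq:fourier_tail} and acquires rapid Gaussian decay, so $\int |\widehat{f}_{0,h}(u)|\|u\|^q du < \infty$ for every $q \geq 1$. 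This allows one to apply Corollary~\ref{cor:smooth_1} to $f_h$, producing a tail bound of the form~\eqref{eq:tail} for $n^{-p}\max_{k\leq n}|V_k(f_{h,p})|$ ($p=1,2$), with $\|\widehat{f}_{0,h}\|_{L_1}$ playing the role of $\|\widehat{f}_0\|_{L_1}$.

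Next, I would bound $\|\widehat{f}_{0,h}\|_{L_1} \leq \|\widehat{f}_0\|_{L_1}$ using the trivial estimate $|\exp(-2\pi^2 h^2\|u\|^2)| \leq 1$, and then exploit~\eqref{eq:fourier_tail} together with polar integration:
\begin{align*}
\|\widehat{f}_0\|_{L_1} \leq L \int_{\RR^d} \frac{du}{1+\|u\|^{d+\varepsilon}} = L\,\Gamma(d)\int_0^\infty \frac{r^{d-1}}{1+r^{d+\varepsilon}}\,dr \leq c_1 L(1+\varepsilon^{-1}),
\end{align*}
where $c_1 = \Gamma(d)$ is the surface area of the unit sphere in $\RR^d$ from~\eqref{eq:polar_constant}, and the bound $\int_0^\infty r^{d-1}(1+r^{d+\varepsilon})^{-1}dr \leq 1+\varepsilon^{-1}$ follows by splitting the radial integral at $r=1$. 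Substituting this bound into the tail estimate for $f_h$ produces exactly the claimed $A_{p,n}$ and $M_{p,n}$, the overall factors $16$ and $4$ coming from $2^{2m}$ and $2^m$ with $m=2$, as in Corollary~\ref{cor:smooth_1}.

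Finally, I would control the approximation error $V_k(f_p) - V_k(f_{h,p})$. Condition~\eqref{eq:fourier_tail} forces $\widehat{f}_0 \in L^1(\RR^d)$, so $f_0$ is bounded, continuous, and vanishes at infinity; standard mollifier estimates then give $\|f_0 - f_{0,h}\|_\infty \to 0$ at a rate governed by the modulus of continuity of $f_0$. Since the Hoeffding decomposition acts linearly on the kernel (via conditional expectations as in~\eqref{eq:HD}), we also get $\|f_p - f_{h,p}\|_\infty \to 0$ for each $p$, and choosing $h$ polynomial in $1/n$ renders $V_k(f_p - f_{h,p})$ negligible relative to the tail we aim to control. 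The main obstacle is the bookkeeping of constants: verifying that the outputs of the shift-invariant application of Theorem~\ref{thm:main}, the polar-coordinate integration, and the mollification error combine cleanly into the stated $A_{p,n}$ and $M_{p,n}$. A minor technical point is that $f_0$ and $f_{0,h}$ need not lie in $L^1(\RR^d)$, but the randomized-feature argument underlying Corollary~\ref{cor:smooth_1} only uses $\widehat{f}_{0,h} \in L^1(\RR^d)$, so the reduction still goes through.
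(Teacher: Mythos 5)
Your proposal is correct and follows essentially the route the paper has in mind (the paper states only that the proof ``is similar to that of Corollary~\ref{cor:smooth_1}''; the intended argument combines the $\RR^d$-mollification of Corollary~\ref{cor:unif_cont} with the shift-invariant construction of Corollary~\ref{cor:smooth_1}). You perform the Gaussian smoothing of $f_0$ on $\RR^d$, note that $\widehat{f}_{0,h}(u)=\widehat{f}_0(u)e^{-2\pi^2h^2\|u\|^2}$ acquires finite moments of every order so that Corollary~\ref{cor:smooth_1} applies to $f_h(x,y)=f_{0,h}(x-y)$, and then bound $\|\widehat{f}_{0,h}\|_{L_1}\le(1+\varepsilon^{-1})\Gamma(d)L$ by polar coordinates exactly as in the proof of Corollary~\ref{cor:unif_cont}; this produces the stated $A_{p,n}$, $M_{p,n}$. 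One organizational difference: you invoke Corollary~\ref{cor:smooth_1} as a black box on $f_h$ and then absorb the mollification error at the level of the V-statistic, via the deterministic bound
$n^{-p}\max_k\abs*{V_k(f_p)-V_k(f_{h,p})}\le\|f_p-f_{h,p}\|_\infty\le C(m)\|f_0-f_{0,h}\|_\infty$, which can be made as small as one likes by shrinking $h$ (with no effect on the tail constants, since the polar bound on $\|\widehat{f}_{0,h}\|_{L_1}$ is $h$-uniform). The paper instead folds both the mollification error and the random-feature approximation error into a single $\abs*{f-\widetilde{f}_h}\le t$ bound and then reruns the final truncation step of Theorem~\ref{thm:main}. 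The two are essentially equivalent here; your version is arguably a bit cleaner because $\|f_0-f_{0,h}\|_\infty$ is controlled uniformly over all of $\RR^d$ (by uniform continuity of $f_0$), so no extra truncation event is needed at the mollification stage. One caveat worth making explicit rather than leaving as a ``minor technical point'': the statement of the corollary suppresses, but implicitly inherits from Corollary~\ref{cor:unif_cont}, the hypotheses that $f_0\in L_1(\RR^d)$ is bounded and uniformly continuous; these are what make $\widehat{f}_0$ well-defined as an $L_1$ integral and what make $\|f_0-f_{0,h}\|_\infty\to0$ hold.
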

Corollaries \ref{cor:unif_cont} and \ref{cor:unif_cont_shift} cover  the cosine kernel, defined as $f(x,y) \define f_0(x-y) \define \prod_{\ell=1}^d \cos(x_\ell-y_\ell)\mathbbm{1}\parr*{\abs*{x_\ell-y_\ell}\leq \pi/2}$. Consider the simple 1-dimensional case. Here, even though the trigonometric identity $\cos(x-y) = \cos(x)\cos(y) + \sin(x)\sin(y)$ gives a direct expansion of $\cos(x-y)$, there is no trivial expansion of the indicator $\mathbbm{1}\parr*{|x-y|\leq \pi/2}$. However, letting $f_0(x) = \cos(x)\mathbbm{1}\parr*{|x|\leq \pi/2}$, it is immediate that $f_0$ is 1-Lipschitz and thus uniformly continuous. Moreover, its Fourier transform is $\abs*{\widehat{f}_0(u)} = \abs*{2\cos(\pi^2 u)/(1-4\pi^2u^2)}$, and hence $f_0$ satisfies \eqref{eq:fourier_tail} with $\varepsilon = 1$ and $L = 2$. 






\section{Proof of Theorem \ref{thm:main}}
\label{sec:proof1}

We will use the following extra notation. For any real-valued function $f$ on $\RR^d$, $\nabla_xf$ is the gradient of $f$. For a set $A$, $|A|$ indicates its cardinality. For a subset $\cal{M}$ in $\RR^d$, we will use $\diam(\cal{M})$ to denote its diameter, i.e. $\diam(\cal{M})\define \sup_{x,y\in\cal{M}}\|x-y\|$. For a function $f$, we write $f(;\theta)$ to emphasize its dependence on some parameter $\theta$. For a measurable set $A$, we will use $\mathbbm{1}\{A\}$ to denote the indicator variable on the set $A$. For any positive integer $N$, we will use $[N]$ to denote the set $\{1,\ldots,N\}$.

As described in the proof sketch after Theorem \ref{thm:main}, we split the main part of the proof into the following lemmas. The first lemma finds a symmetric kernel $\td{f}$ with tensor decomposition \eqref{eq:expansion} that approximates $f$ uniformly over some prescribed set $[-M,M]^{md}$ and accuracy $t$. 

\begin{lemma}
\label{lemma:approximation}
Suppose the kernel $f\in L_1(\RR^{md})$ is continuous and satisfies condition \eqref{eq:fourier_moment} for some $q\geq 1$. Then, for any $M > 0$ and $t > 0$, there exists a symmetric function $\td{f}=\td{f}(;t,M)$ such that $\abs*{f(x_1,\ldots,x_d) - \td{f}(x_1,\ldots,x_d)}\leq t$ uniformly over all $(x_1^\top,\ldots,x_d^\top)^\top\in[-M,M]^{md}$, and $\td{f}$ satisfies \eqref{eq:expansion} for some positive integer $K = K(t,M)$, $\{f_{j_1,\ldots,j_m}\}_{j_1,\ldots,j_m=1}^K$, and $\{e_j(\cdot)\}_{j=1}^K$ such that
\begin{align}
\label{eq:expansion_constraint}
\sum_{j_1,\ldots,j_m=1}^K \abs*{f_{j_1,\ldots,j_m}}\leq F\quad \text{ and } \quad \sup_{1\leq j\leq K}\sup_{x\in\RR^d}\abs*{e_j(x)} \leq B
\end{align}
for some constants $F,B$ that do not depend on $M$ and $t$. In particular, one can take $F = 2^m\nm*{\widehat{f}}_{L_1}$ and $B = 1$. 
\end{lemma}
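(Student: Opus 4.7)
The strategy is a random Fourier features construction combined with the probabilistic method. Since $f \in L_1(\RR^{md})$ is continuous, $\|\widehat{f}\|_{\infty}\leq \|f\|_{L_1}<\infty$, and this together with \eqref{eq:fourier_moment} forces $\widehat{f}\in L_1(\RR^{md})$ (split the integral at $\|u\|=1$). Fourier inversion then yields
\begin{align*}
f(x_1,\ldots,x_m) = \mathrm{Re}\int_{\RR^{md}}\widehat{f}(u)\,e^{2\pi i u^\top x}\,du,
\end{align*}
where $x=(x_1^\top,\ldots,x_m^\top)^\top$. Writing $\widehat{f}(u) = \|\widehat{f}\|_{L_1}\,p(u)\,e^{i\phi(u)}$ in polar form, with probability density $p(u) := |\widehat{f}(u)|/\|\widehat{f}\|_{L_1}$ on $\RR^{md}$, we arrive at
\begin{align*}
f(x) = \|\widehat{f}\|_{L_1}\,\E_{U\sim p}\cos\parr*{2\pi U^\top x + \phi(U)},
\end{align*}
which suggests the unbiased approximation $\td{f}_K(x) := K^{-1}\|\widehat{f}\|_{L_1}\sum_{k=1}^K \cos(2\pi U_k^\top x+\phi(U_k))$ with $U_1,\ldots,U_K$ drawn i.i.d.\ from $p$.

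\paragraph{Uniform approximation via the probabilistic method.}
Pointwise, each summand is bounded in absolute value by $\|\widehat{f}\|_{L_1}$, so Hoeffding's inequality gives $\P\parr*{|\td{f}_K(x) - f(x)| > t/2} \leq 2\exp\{-Kt^2/(8\|\widehat{f}\|_{L_1}^2)\}$. To pass to a uniform bound on $[-M,M]^{md}$ I would combine an $\epsilon$-net of cardinality at most $(CM/\epsilon)^{md}$ with Lipschitz control on both $f$ and $\td{f}_K$. Differentiating under the integral, the Lipschitz constant of $f$ is bounded by $2\pi\int\|u\||\widehat{f}(u)|du$, which is finite by the moment condition \eqref{eq:fourier_moment} with $q\geq 1$; the (random) Lipschitz constant of $\td{f}_K$ is at most $2\pi\|\widehat{f}\|_{L_1}K^{-1}\sum_{k=1}^K \|U_k\|$, which by Markov's inequality stays within twice its mean with probability at least $1/2$. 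Choosing $\epsilon$ polynomially small in $t/M$ and $K$ logarithmically large in $M/t$, the union-bound failure probability over the net can be made strictly less than $1/2$, so some deterministic realization $(u_1,\ldots,u_K)$ achieves $\sup_{x\in[-M,M]^{md}}|\td{f}_K(x)-f(x)|\leq t$.

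\paragraph{Tensor expansion and symmetrization.}
For such a fixed realization, the factorization $e^{2\pi i u_k^\top x}=\prod_{l=1}^m e^{2\pi i u_{k,l}^\top x_l}$ combined with the angle-addition identity
\begin{align*}
\cos\parr*{\theta_1+\cdots+\theta_m+\phi} = \sum_{S\subseteq[m]} c_{S}\prod_{l\in S}\sin\theta_l\prod_{l\notin S}\cos\theta_l,
\end{align*}
where each $c_S\in\{\pm\cos\phi,\pm\sin\phi\}$ and the sum has $2^m$ terms, expands every summand of $\td{f}_K$ into $2^m$ rank-one tensor products of cosines and sines in the individual arguments $x_l$. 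Taking as bases $e_{k,l,\mathrm{c}}(x):=\cos(2\pi u_{k,l}^\top x)$ and $e_{k,l,\mathrm{s}}(x):=\sin(2\pi u_{k,l}^\top x)$ for $(k,l)\in[K]\times[m]$ (all bounded by $1$), and assigning each expanded summand to the unique multi-index whose $l$-th coordinate is $(k,l,\sigma_l)$, produces the form \eqref{eq:expansion}; the total $\ell^1$ mass of the coefficients is at most $(\|\widehat{f}\|_{L_1}/K)\cdot K\cdot 2^m=2^m\|\widehat{f}\|_{L_1}=F$, while $B=1$. A final symmetrization, averaging over the $m!$ permutations of $(x_1,\ldots,x_m)$, enforces the symmetry of $\td{f}$ while preserving uniform accuracy (since $f$ is symmetric), the tensor form, and both bounds.

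\paragraph{Main obstacle.}
The crux is the uniform-approximation step: pointwise Hoeffding is immediate, but extending it to a supremum bound over a continuum requires a simultaneous Lipschitz control on the random $\td{f}_K$. Hypothesis \eqref{eq:fourier_moment} with $q\geq 1$ is precisely what makes $\E\|U\|=\int\|u\|p(u)du$ finite, thereby enabling the probabilistic bootstrap from pointwise to uniform accuracy on the prescribed box $[-M,M]^{md}$.
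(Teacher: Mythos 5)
Your proof is correct and follows essentially the same architecture as the paper's: random Fourier features with Hoeffding's inequality at the nodes of an $\epsilon$-net, Lipschitz control of the random approximant via the finiteness of $\int\|u\|\,|\widehat{f}(u)|\,du$, a Markov bound plus union bound to apply the probabilistic method, trigonometric angle-addition to tensorize, and a final symmetrization over permutations. The only cosmetic difference is that you absorb the complex phase of $\widehat{f}$ into a single probability measure $p(u)=|\widehat{f}(u)|/\|\widehat{f}\|_{L_1}$ with phase $\phi(u)$, whereas the paper splits $\widehat{f}$ into real and imaginary parts and then each into positive and negative pieces, yielding four nonnegative sampling measures; both routes give the same constant $F=2^m\|\widehat{f}\|_{L_1}$ and $B=1$.
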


\begin{proof}
This proof adapts from that of Claim 1 in \cite{rahimi2008random}. 
Throughout the proof, $x_1,\ldots,x_m$ and $u_1,\ldots,u_m$ are real vectors in $\RR^d$, $dx = dx_1\ldots dx_d$, and $x,u$ will be real vectors in $\RR^{md}$. Let $\widehat{f}:\RR^{md}\rightarrow\mathbb{C}$ be the Fourier transform of $f$, that is,
\begin{align*}
\widehat{f}(u_1,\ldots,u_m) = \int_{\RR^{md}} f(x_1,\ldots,x_m)e^{-2\pi i(u_1^\top x_1+\ldots+u_m^\top x_m)}dx_1\ldots dx_m.
\end{align*}
Clearly, Condition \eqref{eq:fourier_moment} with some $q\geq 1$ implies that $\widehat{f}\in L_1\parr*{\RR^{md}}$. Since $f$ is continuous, by the Fourier inversion formula (see, for example, Chapter 6 of \cite{stein2011fourier}), we have
\begin{align*}
f(x_1,\ldots,x_m) = \int_{\RR^d} \widehat{f}(u_1,\ldots,u_m)e^{2\pi i(u_1^\top x_1+\ldots+u_m^\top x_m)}du_1\ldots du_m.
\end{align*}
Note that without the continuity of $f$, the above equation only holds almost surely with respect to the Lebesgue measure. Let $\widehat{f} = \widehat{g} + i\widehat{h}$ for real-valued functions $\widehat{g},\widehat{h}$, then since $f$ is real-valued, we have $f = I - II$, where
\begin{align*}
I &\define \int_{\RR^{md}} \widehat{g}(u_1,\ldots,u_m)\cos\bbrace*{2\pi\parr*{u_1^\top x_1+\ldots + u_m^\top x_m}}du_1\ldots du_m,\\
II &\define \int_{\RR^{md}} \widehat{h}(u_1,\ldots,u_m)\sin\bbrace*{2\pi\parr*{u_1^\top x_1+\ldots+u_m^\top x_m}}du_1\ldots du_m.
\end{align*}
We now approximate $I$ and $II$ separately. $I$ can be further written as $I = I_+ - I_-$, where
\begin{align*}
I_+ &\define \int_{[\widehat{g}>0]}\widehat{g}(u_1,\ldots,u_m)\cos\bbrace*{2\pi\parr*{u_1^\top x_1+\ldots + u_m^\top x_m}}du_1\ldots du_m,\\
I_- &\define \int_{[\widehat{g}<0]}-\widehat{g}(u_1,\ldots,u_m)\cos\bbrace*{2\pi\parr*{u_1^\top x_1+\ldots + u_m^\top x_m}}du_1\ldots du_m.
\end{align*}
Let $A^+_g \define \int_{[\widehat{g} > 0]}\widehat{g}(u)du$ and $A^-_g \define \int_{[\widehat{g} <0]}\parr*{-\widehat{g}(u)}du$, and note that $A^+_g$ and $A^-_g$ are both nonnegative and satisfy $A^+_g + A^-_g = \|\widehat{g}\|_{L_1} < \infty$ and $A^+_g - A^-_g = f(0)$, where we use the fact that $\widehat{g}\in L_1\parr*{\RR^{md}}$ since $\widehat{f}\in L_1\parr*{\RR^{md}}$. Then, we have
\begin{align*}
I &= A^+_g\cdot\E_u\fence*{\cos\bbrace*{2\pi\parr*{u_1^\top x_1 + \ldots u_m^\top x_m}}} - A^-_g\cdot\E_v\fence*{\cos\bbrace*{2\pi\parr*{v_1^\top x_1 + \ldots v_m^\top x_m}}}\\
&=: A^+_g\cdot k^+_g(x_1,\ldots,x_m) - A^-_g\cdot k^-_g(x_1,\ldots,x_m),
\end{align*}
where $(u_1^\top,\ldots,u_m^\top)^\top$ follows the distribution $\widehat{g}\mathbbm{1}\bbrace*{\widehat{g} > 0}/A^+_g$, and $(v_1^\top,\ldots,v_m^\top)^\top$ follows the distribution $-\widehat{g}\mathbbm{1}\bbrace*{\widehat{g}<0}/A^-_g$. Assume without loss of generality that $A^+_g > 0$ and $A^-_g > 0$. We now focus on $I_+$. For any compact subset $\cal{M}\subset \RR^{md}$, there exist $T$ Euclidean balls with radius $r$ that cover $\cal{M}$, where $T \leq \bbrace*{c\,\diam(\cal{M})/r}^{md}$ with $c = 3\sqrt{md/\pi}$. Denote $\{d_1,\ldots, d_T\}$ as the centers of these balls in $\RR^{md}$. Now choose an i.i.d. sample $\{(u_{i1}^\top,\ldots,u_{im}^\top)^\top\}_{i=1}^{D_1}$ from the distribution $\widehat{g}\mathbbm{1}\bbrace*{\widehat{g} > 0}/A^+_g$ with the sample size $D_1$ to be specified later. Then, for each center $d=(d_1^\top,\ldots,d_m^\top)^\top$ and any $t > 0$, it holds by Hoeffding's inequality that
\begin{align*}
\P\bbrace*{\abs*{\frac{1}{D_1}\sum_{i=1}^{D_1}\cos\bbrace*{2\pi\parr*{u_{i1}^\top d_1+\ldots +u_{im}^\top d_m}} - k^+_g(d_1,\ldots,d_m)}\geq \frac{t}{8}}\leq \exp\parr*{-\frac{D_1t^2}{128}}.
\end{align*}
Let $s_{D_1}(x_1,\ldots,x_m) \define \sum_{i=1}^{D_1}\cos\bbrace*{2\pi\parr*{u_{i1}^\top x_1+\ldots +u_{im}^\top x_m}}/D_1$ so that $k^+_g(x_1,\ldots,x_m) = \E_u\bbrace*{s_{D_1}(x_1,\ldots,x_m)}$. Then, for any $q\geq 1$, it holds that
\begin{equation}
\begin{aligned}
\label{eq:lip_constant}
\E\fence*{\sup_{x}\|\nabla_x \bbrace*{s_{D_1}(x)-k_g^+(x)}\|^q} &= \E\fence*{\sup_x\|\nabla_x s_{D_1}(x)-\E\nabla_xs_{D_1}(x)\|^q}\\
&\leq \E\fence*{\sup_x\bbrace*{\|\nabla_x s_{D_1}(x)\| + \E\parr*{\|\nabla_x s_{D_1}(x)\|}}^q}\\
&\leq 2^{q-1}\E\fence*{\sup_x\|\nabla_xs_{D_1}(x)\|^q + \sup_x\bbrace*{\E\parr*{\|\nabla_xs_{D_1}(x)\|}}^q}\\
&\leq 2^q \E\parr*{\sup_x\|\nabla_xs_{D_1}(x)\|^q},
\end{aligned}
\end{equation}
where in the first line we use the finiteness of $\int_{\RR^{md}}\abs*{\widehat{f}(u)}\|u\|du$ (guaranteed by Condition \eqref{eq:fourier_moment}) and dominated convergence theorem to exchange the derivative with expectation. Moreover, 
\begin{align*}
&\ms\E\parr*{\sup_x\|\nabla_xs_{D_1}(x)\|^q} = \E\fence*{\sup_x\nm*{\frac{1}{D_1}\sum_{i=1}^{D_1}2\pi u_i\cos\bbrace*{2\pi\parr*{u_i^\top x}}}^q}\\
&\leq (2\pi)^q \E\bbrace*{\parr*{\frac{1}{D_1}\sum_{i=1}^{D_1}\nm*{u_i}}^q}\leq (2\pi)^q \E\bbrace*{\frac{1}{D_1}\sum_{i=1}^{D_1}\nm*{u_i}^q}= (2\pi)^q\E\parr*{\nm*{u_1}^q},
\end{align*}
where we have used the finiteness of $\E\parr*{\|u_1\|^q}$ since $\int_{\RR^{md}}\abs*{\widehat{f}(u)}\|u\|^qdu < \infty$ and the convexity of the function $x^q$ when $q\geq 1$. Therefore, it holds that
\begin{align*}
\E\bbrace*{\sup_x\|\nabla_x \parr*{s_{D_1}(x) - k^+_g(x)}\|^q} \leq (4\pi)^q\E\parr*{\|u_1\|^q}, 
\end{align*}
and thus by Markov's inequality,
\begin{align*}
\P\parr*{\sup_x\|\nabla_x\bbrace*{s_{D_1}(x) - k^+_g(x)}\|\geq \frac{t}{8r}} \leq \parr*{\frac{32\pi r}{t}}^q\E\parr*{\|u_1\|^q}. 
\end{align*}
By the triangle inequality, the event $\bbrace*{\sup_{x\in \cal{M}}\abs*{s_{D_1}(x)-k^+_g(x)}\leq t/4}$ has greater probability than the following event
\begin{align*}
\Big\{\abs*{s_{D_1}(d) - k^+_g(d)}\leq t/8,\forall d\in\{d_1,\ldots,d_T\}\Big\} \bigcap \Big\{\sup_x\|\nabla_x\bbrace*{s_{D_1}(x) - k^+_g(x)}\|\leq t/(8r)\Big\}.
\end{align*}
Therefore, we have
\begin{align*}
\P\bbrace*{\sup_{x\in \cal{M}}\abs*{s_{D_1}(x) -k^+_g(x)}\geq \frac{t}{4}} \leq \parr*{\frac{c\,\diam(\cal{M})}{r}}^{md}\exp\parr*{-\frac{D_1t^2}{128}} + \parr*{\frac{32\pi r}{t}}^q\E\parr*{\|u_1\|^q}.
\end{align*}
Letting the right-hand side of the above inequality be of the form $\kappa_1r^{-md} + \kappa_2r^q$, and $r = (\kappa_1/\kappa_2)^{1/(q+md)}$, we have
\begin{align*}
\P\bbrace*{\sup_{x\in \cal{M}}\abs*{s_{D_1}(x) -k_g^+(x)}\geq \frac{t}{4}} \leq 2\bbrace*{\frac{32\pi\parr*{\E\|u_1\|^q}^{1/q}c\,\diam(\cal{M})}{t}}^{\frac{qmd}{q+md}}\exp\parr*{-\frac{D_1t^2}{128}\frac{q}{q+md}}.
\end{align*}
Now, using the fact
\begin{align*}
\E\parr*{\|u_1\|^q} = \int_{\RR^{md}} \|u\|^q\frac{\widehat{g}(u)\mathbbm{1}\bbrace*{\widehat{g}(u)>0}}{A^+_g}du \leq \frac{1}{A^+_g}\int_{\RR^{md}}\|u\|^q\abs*{\widehat{g}(u)}du \leq \frac{1}{A_g^+}\int_{\RR^{md}}\|u\|^q\abs*{\widehat{f}(u)}du,
\end{align*}
we conclude that there exists $\{u_i\}_{i=1}^{D_1}\in\RR^{md}$ such that uniformly over $\cal{M}$, it holds that
\begin{align*}
A^+_g\cdot\abs*{s_{D_1}(x) - k_g^+(x)} = \abs*{\frac{A^+_g}{D_1}\sum_{i=1}^{D_1}\cos\bbrace*{2\pi\parr*{u_i^\top x}} - A^+_g\cdot k^+_g(x)} \leq A^+_g\frac{t}{4}
\end{align*}
when $D_1$ is chosen such that
\begin{align*}
D_1 \geq C_1 \frac{md}{t^2}\log\bbrace*{\frac{\pi c\,\diam(\cal{M})\mu_q\parr*{\widehat{f}}}{(A^+_g)^{1/q}t}}
\end{align*}
for some sufficiently large constant $C_1 = C_1(t, f, \cal{M})$. Equivalently, it holds that 
\begin{align*}
\abs*{A^+_g\cdot s_{D_1}(x) - A^+_g\cdot k^+_g(x)}\leq t/4
\end{align*}
when $D_1$ is chosen such that
\begin{align*}
D_1 \geq C_1 \frac{md(A^+_g)^2}{t^2}\log\bbrace*{\frac{\pi c\,\diam(\cal{M})(A^+_g)^{1-1/q}\mu_q\parr*{\widehat{f}}}{t}}.
\end{align*}
Similarly, it can be shown that there exists $\{v_i\}_{i=1}^{D_2}\in\RR^{md}$ 
such that $\abs*{A^-_g\cdot s_{D_2}(x)-A^-_g\cdot k^-_g(x)}\leq t/4$ uniformly over $x\in\cal{M}$, where
\begin{align*}
s_{D_2}(x) = \frac{1}{D_2}\sum_{i=1}^{D_2}\cos\bbrace*{2\pi\parr*{v_i^\top x}},
\end{align*}
and $D_2$ is chosen such that
\begin{align*}
D_2 \geq C_2 \frac{md(A^-_g)^2}{t^2}\log\bbrace*{\frac{8\pi c\,\diam(\cal{M})(A^-_g)^{1-1/q}\mu_q\parr*{\widehat{f}}}{t}}
\end{align*}
for some sufficiently large constant $C_2 = C_2(t, f, \cal{M})$. Repeating this procedure for the approximation of $II$, then with $A^+_h,A^-_h,k^+_h,k^-_h$ similarly defined as $A^+_g,A^-_g,k^+_h,k^-_h$, we can find $s_{D_3}$ and $s_{D_4}$ which are sample means of sine functions such that $\abs*{A^+_h\cdot s_{D_3}(x) - A^+_h\cdot k^+_h(x)}\leq t/4$ and $\abs*{A^-_h\cdot s_{D_4}(x) - A^-_h\cdot k^-_h(x)}\leq t/4$ uniformly over all $x\in\cal{M}$, when the sample sizes $D_3$ and $D_4$ are respectively chosen such that
\begin{align*}
D_3 &\geq C_3 \frac{md(A^+_h)^2}{t^2}\log\bbrace*{\frac{\pi c\,\diam(\cal{M})(A^+_h)^{1-1/q}\mu_q\parr*{\widehat{f}}}{t}},\\
D_4 &\geq C_4 \frac{md(A^-_h)^2}{t^2}\log\bbrace*{\frac{\pi c\,\diam(\cal{M})(A^-_h)^{1-1/q}\mu_q\parr*{\widehat{f}}}{t}}
\end{align*}
for some sufficiently large constants $C_3,C_4$ that depend on $t,f,\cal{M}$. Putting together the pieces, we obtain that
\begin{align*}
\abs*{s_D(x) - f(x)} \define \abs*{\bbrace*{A^+_g\cdot s_{D_1}(x) - A^-_g\cdot s_{D_2}(x) - A^+_h\cdot s_{D_3}(x) + A^-_h\cdot s_{D_4}(x)} - f(x)}
\end{align*}
is smaller than $t$ when $D_1$-$D_4$ are chosen as above. Since 
\begin{align*}
A^+_g + A^-_g + A^+_h + A^-_h = \int_{\RR^{md}}\Big|\widehat{g}\Big| + \Big|\widehat{h}\Big| \leq \sqrt{2}\int_{\RR^{md}}\sqrt{\Big|\widehat{g}\Big|^2+\Big|\widehat{h}\Big|^2} = \sqrt{2}\nm*{\widehat{f}}_{L_1}
\end{align*}
and for each $u$, $\cos\bbrace*{2\pi\parr*{u^\top x}}$ can be written as at most $2^{m-1}$ linear combinations of the term $z_{u_1}\parr*{2\pi u_1^\top x_1}\ldots z_{u_m}\parr*{2\pi u_m^\top x_m}$, where $\{z_{u_i}(\cdot)\}_{i=1}^m$ is either the cosine or sine function.\\
 Therefore, taking $\cal{M} = [-M,M]^{md}$, it holds that $\abs*{s_D-f}\leq t$ uniformly over $[-M,M]^{md}$, and $s_D$ satisfies \eqref{eq:expansion_constraint} with constants $F = 2^m\nm*{\widehat{f}}_{L_1}$ and $B = 1$. Lastly, define the symmetrized version of $s_D$ to be
\begin{align*}
\widetilde{s}_D(x_1,\ldots,x_m) \define \frac{1}{m!}\sum_{\pi}s_D(\pi(x_1),\ldots,\pi(x_m)),
\end{align*}
where the summation is taken over all $m!$ permutations of $(x_1,\ldots,x_m)$. Then, due to the symmetry of $f$, $\abs*{\td{s}_D - f}\leq t$ uniformly over $[-M,M]^{md}$ and $\td{s}_D$ satisfies \eqref{eq:expansion_constraint} with the same $F,B$ as $s_D$. 
\end{proof}

The second lemma builds upon the previous one and guarantees the existence of an approximating kernel $\td{f}$ such that $f_p$ and $\td{f}_p$, the $p$th term in the Hoeffding decomposition of $f$ and $\td{f}$, are sufficiently close for each $1\leq p\leq m$.

\begin{lemma}
\label{lemma:hoeffding_approximation}
Suppose the kernel $f\in L_1(\RR^{md})$ is continuous and satisfies condition \eqref{eq:fourier_moment} for some $q\geq 1$. Then, for any $M > 0$ and $t > 0$, there exists a symmetric function $\td{f}=\td{f}(;t,M)$ such that $\td{f}$ satisfies all the properties in Lemma \ref{lemma:approximation}, and moreover, for each $1\leq p\leq m$, 
\begin{align*}
\abs*{f_p(x_1,\ldots,x_p) - \td{f}_p(x_1,\ldots,x_p)} \leq Ct
\end{align*}
uniformly over all $(x_1^\top,\ldots,x_p^\top)\in[-M,M]^{pd}$ for some positive constant $C = C(m)$.
\end{lemma}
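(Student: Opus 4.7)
The plan is to build $\td f$ by invoking Lemma \ref{lemma:approximation} on an \emph{enlarged} cube $[-M',M']^{md}$ for some $M'\geq M$ to be chosen, and then propagate the uniform estimate through the Hoeffding recursion \eqref{eq:HD}. The enlargement is forced because each $g_p$ is obtained by integrating $f$ against the product measure $P^{\otimes(m-p)}$ on the \emph{whole} space, so an approximation guarantee restricted to $[-M,M]^{md}$ is by itself insufficient to control $|g_p-\td g_p|$ on $[-M,M]^{pd}$.

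Concretely, I would first observe that $\|f\|_\infty\leq \|\widehat f\|_{L_1}$ by Fourier inversion, while the $\td f$ produced by Lemma \ref{lemma:approximation} satisfies $\|\td f\|_\infty\leq FB^m=2^m\|\widehat f\|_{L_1}$ by \eqref{eq:expansion_constraint}. Let $B^\star\define 2^{m+1}\|\widehat f\|_{L_1}$, a global bound for $|f-\td f|$. Since $P$ is a probability measure on $\RR^d$, tightness lets me pick $M'\geq M$ (depending on $t,M,f,P$) with $P\bigl(\RR^d\setminus[-M',M']^d\bigr)\leq t/(mB^\star)$. Applying Lemma \ref{lemma:approximation} with parameters $(t,M')$ then yields a symmetric $\td f$; since $[-M,M]^{md}\subseteq[-M',M']^{md}$, the conclusions of Lemma \ref{lemma:approximation} still hold verbatim on $[-M,M]^{md}$.

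Next I would estimate $|g_p-\td g_p|$ on $[-M,M]^{pd}$. Writing $T_p h(x_1,\ldots,x_p)\define \E h(x_1,\ldots,x_p,\td X_{p+1},\ldots,\td X_m)$, one has $g_p-\td g_p=T_p(f-\td f)-T_0(f-\td f)$, since $\theta=T_0 f$ and $\td\theta=T_0\td f$. For $(x_1,\ldots,x_p)\in[-M,M]^{pd}$, split the expectation defining $T_p(f-\td f)$ at the event $\{\td X_{p+1},\ldots,\td X_m\in[-M',M']^d\}$; on this event $|f-\td f|\leq t$ by Lemma \ref{lemma:approximation}, while on the complement $|f-\td f|\leq B^\star$. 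The union bound combined with the choice of $M'$ yields $|T_p(f-\td f)|\leq t+B^\star\cdot(m-p)\cdot t/(mB^\star)\leq 2t$, and the same argument gives $|T_0(f-\td f)|=|\theta-\td\theta|\leq 2t$. Therefore $|g_p-\td g_p|\leq 4t$ uniformly on $[-M,M]^{pd}$.

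Finally, I would induct on $p$. The base case $p=1$ is immediate since $f_1=g_1$ and $\td f_1=\td g_1$. For the inductive step, \eqref{eq:HD} gives
\begin{align*}
f_p - \td f_p = (g_p-\td g_p) - \sum_{j=1}^{p-1}\,\sum_{1\leq k_1<\cdots<k_j\leq p}(f_j-\td f_j)(x_{k_1},\ldots,x_{k_j}),
\end{align*}
and each subvector $(x_{k_1},\ldots,x_{k_j})$ still lies in $[-M,M]^{jd}$, so the inductive hypothesis together with the bound $|g_p-\td g_p|\leq 4t$ yields $|f_p-\td f_p|\leq C_p t$ with $C_p\define 4+\sum_{j=1}^{p-1}\binom{p}{j}C_j$. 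Since this recursion depends only on $p\leq m$, the constant $C\define \max_{1\leq p\leq m}C_p$ is a function of $m$ alone, matching the statement. The main obstacle, addressed above, is the mismatch between the compact-set approximation of Lemma \ref{lemma:approximation} and the \emph{global} expectations entering the Hoeffding decomposition; it is resolved by tightness of $P$, which absorbs the tail cost into the choice of $M'=M'(t,M,f,P)$ without inflating the final constant $C$.
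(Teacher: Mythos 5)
Your proposal is correct and follows essentially the same route as the paper: apply Lemma~\ref{lemma:approximation} on an enlarged cube $[-M',M']^{md}$, split the expectations defining $\theta$ and $g_p$ at the boundary of this cube, and propagate the resulting uniform bound on $|g_p-\td g_p|$ through the Hoeffding recursion~\eqref{eq:HD} to get a constant $C(m)$. The only stylistic difference is that you replace the paper's dominated-convergence step with an explicit tightness argument, exploiting the observation that Fourier inversion gives $\|f\|_\infty\leq\|\widehat f\|_{L_1}$ so that $|f-\td f|$ has a uniform global bound $B^\star$ --- a slightly cleaner, more quantitative version of the same idea.
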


\begin{proof}
To highlight dependence, for any $t_0 > 0$ and $M_0 > 0$, we will denote the approximating kernel in Lemma \ref{lemma:approximation} by $\td{f} = \td{f}(;t_0,M_0)$, so that $\abs*{f - \td{f}}\leq t_0$ uniformly over $[-M_0,M_0]^{md}$ and $\td{f}$ satisfies \eqref{eq:expansion_constraint} with $F = 2^m\nm*{\widehat{f}}_{L_1}$ and $B = 1$. This implies that
\begin{align}
\label{eq:majorant}
\sup_{(x_1^\top,\ldots,x_m^\top)^\top\in\RR^d} \abs*{\td{f}(x_1,\ldots,x_d;t_0,M_0)} \leq 2^m\nm*{\widehat{f}}_{L_1},
\end{align}
and, in particular, $\td{f}\in L_1(\RR^{md})$ under the product measure $P^m$. We will prove Lemma \ref{lemma:hoeffding_approximation} by choosing a $\td{f}(;t_0,M_0)$ with some $t_0$ and $M_0$ to be specified later that only depend on the prescribed $t$ and $M$. Again, to show clearly the dependence on $t_0$ and $M_0$, we write $\td{\theta}(t_0,M_0)$, $\td{f}_p(;t_0,M_0)$, and $\td{g}_p(;t_0,M_0)$ in the Hoeffding decomposition of $\td{f}(;t_0,M_0)$.
By definition, in order to show $\abs*{f_p - \widetilde{f}_p(;t_0,M_0)}\leq Ct$ over $[-M,M]^{pd}$ for some $C = C(m)$, it suffices to show that $\abs*{\theta - \widetilde{\theta}(t_0,M_0)}\leq 2t_0$ and $\abs*{g_p - \widetilde{g}_p(;t_0,M_0)}\leq 2t_0$ over $[-M_0,M_0]^{pd}$ as long as we choose $t_0 \leq t$ and $M_0 \geq M$.
For $\abs*{\theta - \widetilde{\theta}(t_0,M_0)}$, recalling that $\{\td{X}_i\}_{i=1}^m$ are i.i.d. with law $P$, one has
\begin{align*}
\abs*{\theta - \widetilde{\theta}(t_0,M_0)} &= \abs*{\E\bbrace*{f(\widetilde{X}_1,\ldots,\widetilde{X}_m) - \widetilde{f}(\widetilde{X}_1,\ldots,\widetilde{X}_m;t_0,M_0)}}\\
&\leq \E\bbrace*{\abs*{f - \widetilde{f}(;t_0,M_0)}\mathbbm{1}\bbrace*{(\widetilde{X}_1^\top,\ldots,\widetilde{X}_m^\top)^\top \in [-M_0,M_0]^{md}}} + \\
&\ms\E\bbrace*{\abs*{f - \widetilde{f}(;t_0,M_0)}\mathbbm{1}\bbrace*{(\widetilde{X}_1^\top,\ldots,\widetilde{X}_m^\top)^\top \notin [-M_0,M_0]^{md}}}\\
&\leq t_0 + \E\bbrace*{\abs*{f - \widetilde{f}(;t_0,M_0)}\mathbbm{1}\bbrace*{(\widetilde{X}_1^\top,\ldots,\widetilde{X}_m^\top)^\top \notin [-M_0,M_0]^{md}}},
\end{align*}
Now, by \eqref{eq:majorant}, the variable $\abs*{f - \widetilde{f}}\mathbbm{1}\bbrace*{(\widetilde{X}_1^\top,\ldots,\widetilde{X}_m^\top)^\top \notin [-M_0,M_0]^{md}}$ has an integrable majorant $|f| + 2^m\nm*{\widehat{f}}_{L_1}$ under the product measure $P^m$, and clearly converges to zero in probability as $M_0 \rightarrow \infty$. Thus, by choosing $t_0 = t$ and the dominated convergence theorem, there exists some $M_1 = M_1(t)$ such that for each $M_0 \geq M_1(t)$, $\abs*{\theta - \widetilde{\theta}(t,M_0)} \leq 2t$. With a similar argument, there exists some $M_2 = M_2(t)$ such that for any $M_0 \geq M_2(t)$ and $1\leq p\leq m$, it holds that
\begin{align*}
\abs*{g_p(x_1,\ldots,x_p) - \td{g}_p(x_1,\ldots,x_p;t,M_0)} \leq 2t, \quad \text{ for all } (x_1^\top,\ldots,x_p^\top)^\top \in[-M_0,M_0]^{pd}.
\end{align*} 
Therefore, by choosing $M_0 \define M \vee M_1(t) \vee M_2(t)$, one has $\td{f}(;t, M_0)$ satisfies all the desired properties. This completes the proof.
\end{proof}

The third lemma derives a maximal-type tail bound for each $\max_{1\leq k\leq n}\abs*{V_k(f_p)}$ when $f$ admits the tensor decomposition \eqref{eq:expansion}.

\begin{lemma}
\label{lemma:tail}
Suppose $\{X_i\}_{i=1}^n$ are as in Theorem \ref{thm:main}. Suppose the symmetric kernel $f:\RR^{md}\rightarrow\RR$ can be written as
\begin{align*}
f(x_1,\ldots,x_m) = \sum_{j_1,\ldots,j_m=1}^K f_{j_1,\ldots,j_m}e_{j_1}(x_1)\ldots e_{j_m}(x_m),
\end{align*}
where $K$ is some positive integer, $\{f_{j_1,\ldots,j_m}\}_{j_1,\ldots,j_m=1}^K$ is a real sequence, and $\{e_j(\cdot)\}_{j=1}^K$ is a set of real basis functions satisfying 
\begin{align*}
\sum_{j_1,\ldots,j_m=1}^K \abs*{f_{j_1,\ldots,j_m}}\leq F \quad \text{ and } \quad \sup_{1\leq j\leq K}\sup_{x\in\RR^d}\abs*{e_j(x)} \leq B
\end{align*}
for some positive constants $F$ and $B$. Let $\mu_a\define \sup_{1\leq j\leq K}\parr*{\E\abs*{e_j(X_1)}^a}^{1/a}$ for each $a\geq 1$. Then, there exists a positive constant $C = C(m,\gamma_1,\gamma_2)$ such that for any $1\leq p\leq m$, and any $x \geq 0$, 
\begin{align*}
\P\Big(n^{-p}\max_{1\leq k\leq n}\abs*{V_k(f_p)} \geq x\Big) \leq 6\exp\parr*{-\frac{Cnx^{2/p}}{A_{p,n}^{1/p} + x^{1/p}M_{p,n}^{1/p}}},
\end{align*}
where 
\begin{align*}
A_{p,n}= \mu_1^{2(m-p)}F^2\parr*{\sigma^2 + B^2(\log n)^4/n}^p,\quad M_{p,n} = \mu_1^{m−p}FB^p(\log n)^{2p},
\end{align*}
and $\sigma^2= 64\gamma_1^{1/3}\mu_3^2/(1-\exp(-\gamma_2/3))$.
\end{lemma}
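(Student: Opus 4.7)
The plan is to exploit the tensor structure of $f$ to reduce each $V_k(f_p)$ to a polynomial in the partial sums of the centered basis functions $\tilde e_j \define e_j - \mu_j$ (with $\mu_j \define \E e_j(X_1)$), and then invoke the Bernstein-type inequality for maxima of partial sums of bounded $\alpha$-mixing sequences given by Corollary 24 of \cite{merlevede2013rosenthal}.

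\textbf{Step 1 (tensor structure of $f_p$).} A direct induction on the recursion \eqref{eq:HD}, or equivalently the identity $\prod_{\ell}e_{j_\ell}(x_\ell) = \prod_{\ell}(\tilde e_{j_\ell}(x_\ell)+\mu_{j_\ell})$ expanded and grouped by degree in $\tilde e$, yields the clean formula
\begin{align*}
f_p(x_1,\ldots,x_p) = \sum_{j_1,\ldots,j_m=1}^K f_{j_1,\ldots,j_m}\prod_{\ell=1}^p \tilde e_{j_\ell}(x_\ell)\prod_{\ell=p+1}^m \mu_{j_\ell}.
\end{align*}
Setting $S_k^{(j)} \define \sum_{i=1}^k \tilde e_j(X_i)$ and $T^{(j)} \define \max_{1\leq k\leq n}|S_k^{(j)}|$, and using $|\mu_j|\leq \mu_1$, the triangle inequality reduces the whole problem to
\begin{align*}
\max_{1\leq k\leq n}|V_k(f_p)| \leq \mu_1^{m-p}\sum_{j_1,\ldots,j_m=1}^K|f_{j_1,\ldots,j_m}|\prod_{\ell=1}^p T^{(j_\ell)}.
\end{align*}

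\textbf{Step 2 (uniform Bernstein for $T^{(j)}$).} Since $|\tilde e_j(X_1)|\leq 2B$ and $\E\tilde e_j(X_1)=0$, a Davydov--Rio covariance inequality (controlling $\sum_i|\Cov(\tilde e_j(X_1),\tilde e_j(X_{1+i}))|$ by $\sum_i \alpha(i)^{1/3}\|\tilde e_j\|_3^2$) together with the geometric decay \eqref{eq:mixing_coef} gives $\Var(\sum_{i=1}^k \tilde e_j(X_i))\leq k\sigma^2$ uniformly in $j$, with $\sigma^2$ as in the statement. Corollary 24 of \cite{merlevede2013rosenthal} with these variance and uniform bounds then delivers, uniformly in $j$,
\begin{align*}
\P\parr*{T^{(j)}\geq x} \leq C\exp\parr*{-\frac{c\,x^2}{n\sigma^2+B^2(\log n)^4+xB(\log n)^2}}.
\end{align*}
A standard tail-to-moment conversion yields, for every $s\geq 1$, the uniform bound $\parr*{\E|T^{(j)}|^s}^{1/s}\leq C\sqrt{s(n\sigma^2+B^2(\log n)^4)}+CsB(\log n)^2$.

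\textbf{Step 3 (moment bound on $V_k(f_p)$ and optimization).} Apply Minkowski's inequality in $L^r$ to Step 1, and then H\"older's inequality with $p$ equal exponents inside the $L^r$-norm of the product $\prod_{\ell=1}^p T^{(j_\ell)}$, to obtain
\begin{align*}
\nm*{\max_{1\leq k\leq n}|V_k(f_p)|}_r \leq \mu_1^{m-p}F\parr*{\sup_j \nm*{T^{(j)}}_{rp}}^p.
\end{align*}
Inserting Step 2 and absorbing $p\leq m$ into a constant $C_m$ gives
\begin{align*}
\E\max_{1\leq k\leq n}|V_k(f_p)|^r \leq (C_m\mu_1^{m-p}F)^r\bbrace*{\sqrt{r(n\sigma^2+B^2(\log n)^4)}+rB(\log n)^2}^{rp}.
\end{align*}
Markov's inequality at level $y=n^px$ and the optimizing choice $r\asymp u^2/[(n\sigma^2+B^2(\log n)^4)+uB(\log n)^2]$, with $u \define (y/(eC_m\mu_1^{m-p}F))^{1/p}$, then convert this into a Bernstein-type tail that, after routine algebra, matches the claimed bound with $A_{p,n}$ and $M_{p,n}$ exactly as stated.

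\textbf{Main obstacle.} Step 3 is the crux: the bound must depend only on the $\ell_1$ mass $F$ of the coefficients---never on $K$---since $K=K(t,M)$ from Lemma~\ref{lemma:approximation} can be arbitrarily large in the intended application within Theorem~\ref{thm:main}. The key is that Minkowski on the outside pulls $|f_{j_1,\ldots,j_m}|$ out of the norm for free (summing to at most $F$), while H\"older with exponent $p$ on the inside converts the $r$th moment of a product of $p$ dependent maxima into the $rp$th moment of a single $T^{(j)}$; this is also where the exponent $2/p$ in the final tail originates.
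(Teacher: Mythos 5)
Your proposal is correct and follows essentially the same route as the paper's proof: the same tensor formula for $f_p$ in terms of the centered bases $\tilde e_j$, the same Davydov--Rio covariance bound and Corollary~24 of \cite{merlevede2013rosenthal} applied to the partial-sum maxima $T^{(j)}=\max_k|S_k^{(j)}|$, and the same Minkowski-plus-H\"older step that makes the final bound depend only on $F$ and not on $K$. The only cosmetic difference is at the very end: the paper bounds the Laplace transform of $T_p^{1/p}$ and applies an exponential Markov (Chernoff) inequality, whereas you optimize a polynomial Markov bound over the moment order $r$; these are textbook-equivalent ways of converting the uniform moment bound into the Bernstein-type tail, and both yield $A_{p,n}$ and $M_{p,n}$ as stated.
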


\begin{proof}
Throughout the proof, let $C_i$'s be positive constants that only depend on $m,\gamma_1,\gamma_2$, and we will use the shorthand $f_{j_{a:b}}$ for $f_{j_a,\ldots,j_b}$ for positive integers $a < b$. We drop the dependence of $A_{p,n}$ and $M_{p,n}$ on $n$ for notational simplicity.

Fix a $1\leq p\leq m$ and we now derive the tail bound for $\max_{1\leq k\leq n}\abs*{V_k(f_p)}$. 
For the set of bases $\{e_j(\cdot)\}_{j=1}^K$ in the expansion of $f$, define $\widetilde{e}_j\define e_j - \E\bbrace*{e_j(X_1)}$ for $j\in[K]$. Since $f$ is symmetric, for any $(x_1^\top,\ldots,x_m^\top)^\top\in\RR^{md}$, $f(x_1,\ldots,x_m) = f(\pi(x_1),\ldots,\pi(x_m))$ for any permutation $\pi$ of $\{x_1,\ldots,x_m\}$. By the definition of $\{f_p\}_{p=1}^m$ in \eqref{eq:HD}, one can readily check that 
\begin{align*}
f_p(x_1,\ldots,x_p) = \sum_{j_1,\ldots,j_m=1}^K f_{j_1,\ldots,j_m} \E\parr*{e_{j_1}}\ldots\E\parr*{e_{j_{m-p}}}\widetilde{e}_{j_{m-p+1}}(x_1)\ldots\widetilde{e}_{j_{m}}(x_p),
\end{align*}
for $r\leq p\leq m$. Thus, we have
\begin{align*}
V_k(f_p) = \sum_{j_1,\ldots,j_m=1}^K \fm \E\parr*{e_{j_1}}\ldots\E\parr*{e_{j_{m-p}}}\bbrace*{\sum_{i=1}^k \widetilde{e}_{j_{m-p+1}}(X_i)}\ldots\bbrace*{\sum_{i=1}^k \widetilde{e}_{j_{m}}(X_i)}.
\end{align*}
Define, for each $j\in[K]$ and $k\in[n]$, 
\[
S_{k,j} \define \sum_{i=1}^k \widetilde{e}_{j}(X_i)~~~ {\rm and}~~~ Z_j \define \max_{1\leq k\leq n}\abs*{S_{k,j}}.
\]
Note that for each $j\in[K]$, $\{\widetilde{e}_{j}(X_i)\}_{i=1}^n$ is also geometrically $\alpha$-mixing. We now control each even order moment of $\max_{1\leq k\leq n}\abs*{V_k(f_p)}$. Let 
\[
T_p\define n^{-p}\max_{1\leq k\leq n}\abs*{V_k(f_p)}. 
\]
Define
\begin{align*}
\nu_{j} = C_3(n\sigma^2_{j}+B^2), \quad \nu = C_3(n\sigma^2+B^2), \quad c = C_4B(\log n)^2
\end{align*}
and $\sigma^2 = \sup_{j\in[K]}\sigma_j^2$, with
\begin{align*}
\sigma_j^2 \define \var\bbrace*{\widetilde{e}_j(X_1)} + 2\sum_{i>1}\abs*{\cov\bbrace*{\widetilde{e}_j\parr*{X_1},\widetilde{e}_j\parr*{X_i}}}.
\end{align*}
Integrating the tail estimate in Corollary 24 of \cite{merlevede2013rosenthal} and using Theorem 2.3 in \cite{boucheron2013concentration} yield that, for any positive integer $N$, by choosing  $C_4$ in $c$ to be sufficiently large,
\begin{align*}
\E(Z_j^{2pN}) &= (4pN)\int_0^\infty x^{2pN-1}\P\parr*{Z_j \geq x}dx\\
& = 4pN\cdot\parr*{\int_0^{CB\log n} x^{2pN-1}\P\parr*{Z_j \geq x}dx + \int_{CB\log n}^\infty x^{2pN-1}\P\parr*{Z_j \geq x}dx}\\
&\leq C(B\log n)^{2pN} + (pN)!(8\nu)^{pN} + (2pN)!(4c)^{2pN}\\
&\leq (pN)!(8\nu)^{pN} + (2pN)!(5c)^{2pN}.
\end{align*}
Then, employing a similar argument as in \cite{borisov2015note} (cf. Equation (12) therein), it holds that 
\begin{align*}
&\ms \E T_p^{2N} = \E\max_{1\leq k_1,\ldots,k_{2N}\leq n}\sum_{j_1,\ldots,j_{2mN}=1}^K f_{j_{1:m}}\ldots f_{j_{(2N-1)m+1:2mN}}\cdot \E\parr*{e_{j_1}}\ldots\E\parr*{e_{j_{m-p}}}\ldots\\
&\ms \E\parr*{e_{j_{(2N-1)m+1}}}\ldots\E\parr*{e_{j_{2Nm - p}}}\E\parr*{S_{k_1,j_{m-p+1}}\ldots S_{k_1,j_{m}}\ldots S_{k_{2N},j_{2Nm-p+1}}\ldots S_{k_{2N},j_{2Nm}}}\\
&\leq n^{-2Np}\mu_1^{2N(m-p)}\sum_{j_1,\ldots,j_{2mN}=1}^K \abs*{f_{j_{1:m}}}\ldots \abs*{f_{j_{(2N-1)m+1:2mN}}} \E\parr*{\abs*{Z_{j_{m+1-p}}}\ldots \abs*{Z_{j_{2mN}}}}\\
&\leq n^{-2Np}\mu_1^{2N(m-p)}\sum_{j_1,\ldots,j_{2mN}=1}^K \abs*{f_{j_{1:m}}}\ldots\abs*{f_{j_{(2N-1)m+1:2mN}}}\bbrace*{\E\parr*{Z_{j_{m+1-p}}^{2pN}}}^{\frac{1}{2pN}}\ldots\bbrace*{\E\parr*{Z_{j_{2mN}}^{2pN}}}^{\frac{1}{2pN}}\\
&\leq n^{-2Np}\mu_1^{2N(m-p)}F^{2N}\bbrace*{(pN)!(8\nu)^{pN} + (2pN)!(4c)^{2pN}},
\end{align*}
where in the second inequality we use the generalized H\"older inequality. By Stirling's approximation formula $\sqrt{2\pi}n^{n+1/2}e^{-n} \leq n! \leq en^{n+1/2}e^{-n}$, it holds that
\begin{align*}
\bbrace*{(pN)!}^{1/p} &\leq e^{1/p} (pN)^{N+1/2p} e^{-N} \leq C_5^N N^{N+1/2} e^{-N}\leq C_6^NN!.
\end{align*}
Similarly, we have $\bbrace*{(2pN)!}^{1/p}\leq C_6^{2N}(2N)!$. Thus we have
\begin{align*}
\E\parr*{T_p}^{\frac{2N}{p}} &\leq \parr*{\E T_p^{2N}}^{\frac{1}{p}}\\
& \leq n^{-2N}\mu_1^{\frac{2N(m-p)}{p}}F^{\frac{2N}{p}}\bbrace*{C_7^NN!\nu^N+C_8^{2N}(2N)!c^{2N}}.
\end{align*}
Now we control the Laplace transform of $T_p^{1/p}$, 
\begin{align}
\label{eq:bern_moment}
\ms&\E\parr*{e^{\lambda T_p^{1/p}}} = \sum_{N=0}^\infty \frac{\lambda^N}{N!}\E T_p^{N/p} \leq 3\sum_{N=0}^\infty \frac{\lambda^{2N}}{(2N)!}\E T_p^{2N/p}\notag\\
\leq& 3\bbrace*{\sum_{N=0}^\infty \frac{\lambda^{2N}}{(2N)!}C_7^Nn^{-2N}N!\mu_1^{\frac{2N(m-p)}{p}}F^{\frac{2N}{p}}\nu^N + \sum_{N=0}^\infty \lambda^{2N}n^{-2N}C_8^{2N}\mu_1^{\frac{2N(m-p)}{p}}F^{\frac{2N}{p}}c^{2N}},
\end{align}
where in the first inequality we use only the even moments with an absolute constant 3. For the first summand in \eqref{eq:bern_moment}, we have
\begin{align*}
\sum_{N=0}^\infty \lambda^{2N}\frac{N!}{(2N)!}C_7^Nn^{-2N}\mu_1^{2N(m-p)/p}F^{2N/p}\nu^N &\leq \sum_{N=0}^\infty \frac{(\lambda/n)^{2N}}{N!}2^{-N}C_{9}^N\mu_1^{2N(m-p)/p}F^{2N/p}\nu^N\\
&= \exp\bbrace*{C_{10}(\lambda/n)^2\mu_1^{2(m-p)/p}F^{2/p}\nu},
\end{align*}
where in the first line we use the relation $N!/(2N)!\leq 2^{-N}/N!$. For the second summand, we have
\begin{align*}
\sum_{N=0}^\infty \lambda^{2N}n^{-2N}C_8^{2N}\mu_1^{2N(m-p)/p}F^{2N/p}c^{2N} &= 1 + \frac{(\lambda/n)^2C_8^2\mu_1^{2(m-p)/p}F^{2/p}c^2}{1-(\lambda/n)^2C_8^2\mu_1^{2(m-p)/p}F^{2/p}c^2}\\
&\leq 1 + \frac{(\lambda/n)^2C_8^2\mu_1^{2(m-p)/p}F^{2/p}c^2}{1-(\lambda/n) C_8\mu_1^{(m-p)/p}F^{1/p}c}
\end{align*}
for $\lambda \leq n/\bbrace*{C_8\mu_1^{(m-p)/p}F^{1/p}c}$. Now using the relation $e^{x} + 1 + y \leq 2e^{x+y}$ which holds for all positive $x,y$, we have
\begin{align*}
\E\parr*{e^{\lambda T_p^{1/p}}} &\leq 6\exp\fence*{\frac{(\lambda/n)^2C_{11}\mu_1^{\frac{2(m-p)}{p}}F^{\frac{2}{p}}(\nu + c^2)}{2\bbrace*{1-(\lambda/n) C_8c\mu_1^{\frac{(m-p)}{p}}F^{\frac{1}{p}}}}} = 6\text{exp}\bbrace*{\frac{(\lambda/n)^2C_{11}nA_p^{1/p}}{2\parr*{1-(\lambda/n) C_8M_p^{1/p}}}}.
\end{align*}
Now, taking $\lambda = nx^{1/p}/(C_{11}nA_p^{1/p} + C_8M_p^{1/p}x^{1/p})$ in the exponential Markov inequality, we have
\begin{align*}
\P\parr*{T_p\geq x} \leq \exp(-\lambda x^{1/p})\E\parr*{e^{\lambda T_p^{1/p}}} \leq 6\exp\parr*{-\frac{C_{12}nx^{2/p}}{A_p^{1/p}+x^{1/p}M_p^{1/p}}}.
\end{align*}
Moreover, taking $\delta = 1$ in Theorem 3 of \cite{doukhan1994lecture}, we obtain
\begin{align*}
&\ms\var\bbrace*{\widetilde{e}_j(X_1)} + 2\sum_{i>1}\abs*{\cov\bbrace*{\widetilde{e}_j(X_1), \widetilde{e}_j(X_i)}}\leq 2\sum_{i\geq 1}\abs*{\cov\bbrace*{\widetilde{e}_j(X_1), \widetilde{e}_j(X_i)}}\\
&\leq 2\bbrace*{\sum_{n=0}^\infty 8\alpha^{1/3}(n)}\|\widetilde{e}_j(X_1)\|_{3}\|\widetilde{e}_j(X_1)\|_{3}\leq 64\bbrace*{\sum_{n=0}^\infty \alpha^{1/3}(n)}\|e_j(X_1)\|_{3}\|e_j(X_1)\|_{3}\\
&\leq 64\gamma_1^{1/3}\mu_{3}^2\bbrace*{\sum_{n=0}^\infty \exp\parr*{-\gamma_2n/3}}= \frac{64\gamma_1^{1/3}}{1-\exp\bbrace*{-\gamma_2/3}}\mu_{3}^2.
\end{align*}
Putting together the pieces completes the proof. 
\end{proof}

We now use Lemmas \ref{lemma:approximation}-\ref{lemma:tail} to complete the proof of Theorem \ref{thm:main}.
\begin{proof}[Proof of Theorem \ref{thm:main}]
Fix a $1\leq p\leq m$. Fix some $t > 0$ and $M>0$, and define the event
\begin{align*}
\cal{E} \define \{X_i \in [-M,M]^d \text{ for all } 1\leq i \leq n \}.
\end{align*}
Then, for the prescribed $t$ and $M$, Lemma \ref{lemma:hoeffding_approximation} implies that there exists a symmetric kernel $\td{f} = \td{f}(;t,M)$ such that $\abs*{f-\td{f}(;t,M)}\leq t$ uniformly over all $(x_1^\top,\ldots,x_m^\top)^\top\in[-M,M]^{md}$, and for each $1\leq p\leq m$, $\abs*{f_p - \td{f}_p(;t,M)}\leq Ct$ uniformly over $[-M,M]^{pd}$ for some $C =C(m)$. By definition, this implies that $n^{-p}\abs*{\max_{1\leq k\leq n}\abs*{V_k(f_p)} - \max_{1\leq k\leq n}\abs*{V_k(\td{f}_p)}}\leq Ct$ on the event $\cal{E}$, and thus for any $x > 0$,
\begin{align*}
&\ms\P(n^{-p}\max_{1\leq k\leq n}\abs*{V_k(f_p)}\geq x + Ct)\\
&= \P\parr*{\bbrace*{n^{-p}\max_{1\leq k\leq n}\abs*{V_k(f_p)}\geq x + Ct} \bigcap \cal{E}} + \P\parr*{\bbrace*{n^{-p}\max_{1\leq k\leq n}\abs*{V_k(f_p)}\geq x + Ct} \bigcap \cal{E}^c} \\
&\leq \P\parr*{n^{-p}\max_{1\leq k\leq n}\abs*{V_k(\td{f}_p(;t,M))}\geq x} + n\P(X_1\notin [-M,M]^d).
\end{align*}
Again by Lemma \ref{lemma:hoeffding_approximation}, $\td{f}(;t,M)$ satisfies the conditions of Lemma \ref{lemma:tail} with constants $F = 2^m\nm*{\widehat{f}}_{L_1}$ and $B = 1$. Therefore, applying the trivial bound that $\mu_3 \leq B  = 1$ in Lemma \ref{lemma:tail}, we obtain that
\begin{align*}
\P\Big(n^{-p}\max_{1\leq k\leq n}\abs*{V_k(f_p)}\geq x + Ct\Big) \leq 6\exp\parr*{-\frac{Cnx^{2/p}}{A_p^{1/p} + x^{1/p}M_p^{1/p}}} + n\P(X_1\notin [-M,M]^d),
\end{align*}
where $A_p$ and $M_p$ are defined in \eqref{eq:constant}. Now, note that the first summand on the right hand side does not depend on $M$ or $t$. Accordingly, by first choosing a large enough $M$ that depends only on $x,n,F$, since the measure $P$ considered in this paper is always tight, we obtain that the second term is smaller than an arbitrary small proportion of the first term. Lastly, choosing $t = x$ and adjusting the constant finishes the proof. 
\end{proof}

\section{Proofs of other results}
\label{sec:proof2}

We will only prove Corollaries \ref{cor:smooth_1}-\ref{cor:unif_cont}. The proof of Corollary \ref{cor:unif_cont_shift} is similar to that of Corollary \ref{cor:smooth_1}.

\begin{proof}[Proof of Corollary \ref{cor:smooth_1}]
By inspection of the proof of Theorem \ref{thm:main}, it suffices to prove that when $f_0$ satisfies \eqref{eq:fourier} for some $q\geq 1$, the conclusion of Lemma \ref{lemma:approximation} still holds with $F = 4\nm*{\widehat{f}_0}_{L_1}$ and $B = 1$. Now, following the proof of Lemma \ref{lemma:approximation} with prescribed range $2M$ and approximation error $t$, there exists an $\widetilde{f}_0$ with expansion in the cosine bases $\{\cos\parr*{2\pi u^\top x}\}$ such that $\abs*{f_0-\widetilde{f}_0}\leq t$ uniformly over $[-2M,2M]^d$ and $\widetilde{f}_0$ satisfies the \eqref{eq:expansion_constraint} with constants $F = 2\nm*{\widehat{f}_0}_{L_1}$ and $B = 1$. Let $\widetilde{f}(x, y) \define \widetilde{f}_0(x-y)$. Then, $\abs*{f - \td{f}}\leq t$ uniformly over $[-M,M]^{2d}$, and by the trigonometric identity
\begin{align*}
\cos\bbrace*{2\pi u^\top (x-y)} = \cos\parr*{2\pi u^\top x}\cos\parr*{2\pi u^\top y} + \sin\parr*{2\pi u^\top x}\sin\parr*{2\pi u^\top y},
\end{align*}
$\widetilde{f}$ satisfies \eqref{eq:expansion_constraint} with constants $F = 4\nm*{\widehat{f}_0}_{L_1}$ and $B = 1$. This completes the proof.
\end{proof}

\begin{proof}[Proof of Corollary \ref{cor:smooth_2}]
Again, we only need to reprove Lemma \ref{lemma:approximation}. When $f_0$ is PD, we have by definition that $f_0(0) \geq 0$ and for each $x,y\in\RR^d$, $f_0(x-y) = f_0(y-x)$ implies that $f_0(x) = f_0(-x)$ for any $x\in\RR^d$. This implies that the Fourier transform $\widehat{f}_0$ of $f_0$ is real-valued, and $\widehat{h} = 0$ in the proof of Lemma \ref{lemma:approximation}. Moreover, since $f_0\in L_1(\RR^d)$ as it satisfies \eqref{eq:fourier} for some $q\geq 1$, $f$ equals the inverse Fourier transform of $\widehat{f}$. Thus, by Bochner's theorem (cf. Section 1.4.3, \cite{rudin2017fourier}), $\widehat{f}_0$ is nonnegative and we have $f_0 = I = I_+$ with $m = 1$ in the proof of Lemma \ref{lemma:approximation}. By definition, we have
\begin{align*}
f_0(x) = \int_{\RR^d} \widehat{f}_0(u)e^{2\pi ix^\top u}du = \int_{\RR^d} \abs*{\widehat{f}_0(u)}e^{2\pi ix^\top u}du.
\end{align*}
Letting $x = 0$ in the above equation, we obtain $\nm*{\widehat{f}_0}_{L_1} = f_0(0)$. 

Now consider the case where $\widehat{f}_0$ only has fractional moment. Let $\widetilde{f}_0 \define f_0/f_0(0)$ and denote the Lipschitz constants of $\widetilde{f}_0$ and $f_0$ as $L_{\widetilde{f}_0}$ and $L_{f_0}$, respectively. Then, $L_{\widetilde{f}_0} = L_{f_0}/f_0(0)$. Now we proceed with the proof of Lemma \ref{lemma:approximation} until \eqref{eq:lip_constant}, and replace it with
\begin{align*}
\E\bbrace*{\sup_x \nm*{\nabla_x s_D(x) - \widetilde{f}_0(x)}^q} &\leq \E\bbrace*{\sup_x \parr*{\nm*{\nabla_x s_D(x)}^q + \nm*{\nabla_x \widetilde{f}_0(x)}^q}}\\
&\leq \E\bbrace*{\sup_x\nm*{\nabla_x s_D(x)}^q} + \sup_x\nm*{\nabla_x\widetilde{f}_0(x)}^q\\
&\leq \E\bbrace*{\sup_x\nm*{\nabla_x s_D(x)}^q} + L_{\widetilde{f}_0}^q,
\end{align*}
where $s_D(x) = \sum_{i=1}^D \cos\parr*{2\pi u_i^\top x}/D$ (here we use the notation $s_{D}$ instead of $s_{D_1}$ since in the PD case we only need to approximate the term $I_+$ as argued in the first part of the corollary). Note that the original \eqref{eq:lip_constant} in the proof of Lemma \ref{lemma:approximation} no longer holds as mere fractional moment does not guarantee the exchange of derivative and expectation in its first step. For the first term in the above inequality, we have
\begin{align*}
&\ms\E\bbrace*{\sup_x\|\nabla_xs_{D}(x)\|^q} = \E\fence*{\sup_x\nm*{\frac{1}{D}\sum_{i=1}^{D}2\pi u_i\cos\bbrace*{2\pi\parr*{u_i^\top x}}}^q}\\
&\leq (2\pi)^q\E\parr*{\nm*{\frac{1}{D}\sum_{i=1}^Du_i}^q} \leq (2\pi)^q \E\bbrace*{\parr*{\frac{1}{D}\sum_{i=1}^{D}\nm*{u_i}}^q}\\
&\leq (2\pi)^q \E\bbrace*{D^{-q}\sum_{i=1}^{D}\nm*{u_i}^q} = (2\pi)^qD^{1-q}\E\parr*{\nm*{u_1}^q}.
\end{align*}
Therefore, it holds that
\begin{align*}
\E\bbrace*{\sup_x \nm*{\nabla_x s_D(x) - \widetilde{f}_0(x)}^q} \leq 
 (2\pi)^qD^{1-q}\E\parr*{\|u\|^q} + L_{\widetilde{f}_0}^q.
\end{align*}
Markov inequality now gives
\begin{align*}
\P\bbrace*{\sup_x\nm*{\nabla_x\parr*{s_D(x) - \widetilde{f}_0(x)}}\geq \frac{t}{2r}} \leq \parr*{\frac{2r}{t}}^q\bbrace*{(2\pi)^qD^{1-q}\E\parr*{\|u\|^q} + L_{\widetilde{f}_0}^q}. 
\end{align*}
Proceeding with the proof of Lemma \ref{lemma:approximation}, we obtain 
\begin{align*}
\P\bbrace*{\sup_{x\in\cal{M}}\abs*{s_D(x) - \widetilde{f}_0}\geq t} \leq \parr*{\frac{2r}{t}}^q\bbrace*{(2\pi)^qD^{1-q}\E\parr*{\|u\|^q} + L_{\widetilde{f}_0}^q} + \parr*{\frac{c\,\diam(\cal{M})}{r}}^{md}\exp\parr*{-\frac{Dt^2}{8}}.
\end{align*}
Writing the right-hand side of the above inequality in the form $\kappa_1r^{-md} + \kappa_2r^q$ and letting $r = (\kappa_1/\kappa_2)^{1/(q+md)}$, we obtain
\begin{align*}
\P\bbrace*{\sup_{x\in\cal{M}}\abs*{s_D(x) - \widetilde{f}_0}\geq t} \leq 2\parr*{\frac{2c\,\diam(\cal{M})}{\varepsilon}}^{\frac{qmd}{q+md}}\bbrace*{(2\pi)^qD^{1-q}\E\parr*{\|u_1\|^q} + L_{\widetilde{f}_0}^q}^{\frac{md}{q+md}}\exp\parr*{-\frac{D\varepsilon^2}{8}\frac{q}{q+md}}.
\end{align*}
For any $t > 0$, we can choose large enough $D = D(t)$ such that the right-hand side of the above inequality is arbitrarily small. The proof is complete.
\end{proof}

\begin{proof}[Proof of Corollary \ref{cor:unif_cont}]
Let $K(\cdot):\RR^{md}\rightarrow\RR$  be the standard $md$-variate Gaussian density defined as $K(x) \define \exp(-\|x\|^2/2)(2\pi)^{-md/2}$, and $K_h(x) = K(x/h)h^{-md}$ for some positive constant $h$. Define $f_h(x) \define (f*K_h)(x)$. Then, it holds that
\begin{align*}
\abs*{f_h(x) - f(x)} &= \abs*{\int_{\RR^{md}} (2\pi)^{-md/2}\exp\parr*{-\frac{\|y\|^2}{2}}\bbrace*{f(x-yh) - f(x)}dy}\\
&\leq \int_{\RR^{md}} (2\pi)^{-md/2}\exp\parr*{-\frac{\|y\|^2}{2}}\abs*{f(x-yh) - f(x)}dy.
\end{align*}
Denote the upper bound of $f$ as $M_f$. Then, for any $t > 0$, there exists some positive constant $A = A(M_{f},m,d,t)$ such that
\begin{align*}
&\ms\int_{\parr*{[-A,A]^{md}}^c}(2\pi)^{-md/2}\exp\parr*{-\frac{\|y\|^2}{2}}\abs*{f(x-yh) - f(x)}dy\\
&\leq 2M_{f}\int_{\parr*{[-A,A]^{md}}^c}(2\pi)^{-md/2}\exp\parr*{-\frac{\|y\|^2}{2}}dy \leq t/4.
\end{align*}
Inside $[-A,A]^{md}$, using the uniform continuity of $f$, there exists some $h = h(M_{f},m,d,t)$, such that
\begin{align*}
\int_{[-A,A]^{md}}(2\pi)^{-md/2}\exp\parr*{-\frac{\|y\|^2}{2}}\abs*{f(x-yh) - f(x)}dy \leq t/4.
\end{align*}
Putting together the pieces, it holds that for any $t > 0$, there exists some $h = h(M_{f},m,d,t)$ such that $\|f_h-f\|_\infty\leq t/2$.
 Since both $f$ and $K_h$ belong to $L_1(\RR^{md})$, their Fourier transforms exist. It can be readily checked that $\widehat{K}_h(u) = \exp\parr*{-2\pi^2h^2\|u\|^2}$, and thus
\begin{align*}
\widehat{f}_h(u) = \widehat{f}(u)\cdot \widehat{K}_h(u) = \widehat{f}(u)\exp\parr*{-2\pi^2h^2\|u\|^2}.
\end{align*}
Using the relation $\|f*g\|_{L_q} \leq \|f\|_{L_q}\|g\|_{L_1}$ for any $q\geq 1$ and $f\in L_q(\RR^{md}), g\in L_1(\RR^{md})$ and the fact that $K_h\in L_1(\RR^{md})$, it holds that $f_h\in L_1(\RR^{md})$. Moreover, it can readily checked that 
\begin{align*}
\mu_q^q\parr*{\widehat{f}_h} = \int_{\RR^{md}}\abs*{\widehat{f}_h(u)}\|u\|^qdu = \int_{\RR^{md}}\abs*{\widehat{f}(u)}\|u\|^q\exp\parr*{-2\pi^2h^2\|u\|^2}du < \infty
\end{align*}
for any $q\geq 1$. Therefore, by Lemma \ref{lemma:approximation}, for any given $M > 0$ and $t > 0$, we can find an approximating kernel $\widetilde{f}_h = \widetilde{f}_h(t)$ such that $\abs*{\widetilde{f}_h-f_h}\leq t/2$ uniformly over $[-M,M]^{md}$, and $\widetilde{f}_h$ further satisfies \eqref{eq:expansion_constraint} with constants $F = 2^m\nm*{\widehat{f}_h}_{L_1}$ and $B = 1$.
Choosing $h = h(M_{f},m,d,t/2)$, by the triangle inequality, we have
\begin{align*}
\abs*{f - \widetilde{f}_h} \leq t/2 + t/2 = t
\end{align*}
uniformly over $[-M,M]^{md}$. Lastly, we upper bound the term $\nm*{\widehat{f}_h}_{L_1}$. To this end, we have 
\begin{align*}
\nm*{\widehat{f}_h}_{L_1} &= \int_{\RR^{md}} \abs*{\widehat{f}(u)}\exp(-2\pi^2h^2\|u\|^2)du \leq L\int_{\RR^{md}} \frac{1}{1+\|u\|^{md+\varepsilon}}\exp(-2\pi^2h^2\|u\|^2)du.
\end{align*}
Using polar coordinates, it holds that
\begin{align*}
\nm*{\widehat{f}_h}_{L_1} &\leq \Gamma(md)L\int_0^\infty \frac{r^{md-1}}{1+r^{md+\varepsilon}}\exp(-2\pi^2h^2r^2)dr\\
&\leq \Gamma(md)L\parr*{1 + \int_1^\infty \frac{1}{r^{1+\varepsilon}}dr} \\
&= (1+\varepsilon^{-1})\Gamma(md)L.
\end{align*}
This completes the proof.
\end{proof}

\bibliography{reference}

\end{document}